\documentclass{amsart}
\usepackage[all]{xy}
\usepackage{amssymb}
\usepackage{url}

\newcommand{\R}{\mathbb{R}}
\newcommand{\fC}{\mathfrak{C}}
\newcommand{\fB}{\mathfrak{B}}
\newcommand{\fG}{\mathfrak{G}}
\newcommand{\sG}{\mathsf{G}}
\newcommand{\boundary}{\partial}
\newcommand{\modsim}{/ \!\! \sim \,}
\newcommand{\canomega}{\omega_{\mathrm{can}}}
\newcommand{\canlam}{\lambda_{\mathrm{can}}}
\newcommand{\cycl}{\{\mathrm{cycl.}\}}
\newcommand{\LWX}{\mathrm{LWX}}

\newcommand{\cald}{{\mathcal {D}}}
\newcommand{\frakg}{\mathfrak{G}}
\newcommand{\vect}{\mathfrak{X}}
\DeclareMathOperator{\im}{im}
\newcommand{\tolabel}[1]{\xrightarrow{#1}}

\newtheorem{thm}{Theorem}[section]
\newtheorem{prop}[thm]{Proposition}
\newtheorem{lemma}[thm]{Lemma}
\newtheorem{cor}[thm]{Corollary}
\newtheorem{conj}[thm]{Conjecture}

\theoremstyle{definition}
\newtheorem{definition}[thm]{Definition}

\theoremstyle{remark}
\newtheorem{remark}[thm]{Remark}

\newtheorem*{ack}{Acknowledgements}
\newtheorem*{org}{Organization of the paper}
\numberwithin{equation}{section}

\begin{document}

\title[Integration of exact Courant algebroids]{Symplectic structures on the integration of exact Courant algebroids}
\author{Rajan Amit Mehta}
\address{Department of Mathematics \& Statistics\\
Smith College\\
44 College Lane\\
Northampton, MA 01063}
\email{rmehta@smith.edu}

\author{Xiang Tang}
\address{Department of Mathematics\\
Washington University in Saint Louis\\
One Brookings Drive\\
Saint Louis, Missouri, USA 63130}
\email{xtang@math.wustl.edu}

\begin{abstract}
We construct an infinite-dimensional symplectic $2$-groupoid as the integration of an exact Courant algebroid. We show that every integrable Dirac structure integrates to a ``Lagrangian" sub-$2$-groupoid of this symplectic $2$-groupoid. As a corollary, we recover a result of Bursztyn-Crainic-Weinstein-Zhu that every integrable Dirac structure integrates to a presymplectic groupoid.
\end{abstract}

\dedicatory{Dedicated to Professor Alan Weinstein on the occasion of his $70$th birthday}

\subjclass[2010]{53D17, 58H05} 
\keywords{Dirac, symplectic, 2-groupoids, Courant algebroids}

\maketitle

\section{Introduction}
In the late 80's, T. Courant  and A. Weinstein \cite{cw:courant} introduced the notion of \emph{Dirac structure} as a way of unifying Poisson, symplectic, and presymplectic structures. An important ingredient in the definition of Dirac structure is a bracket, now called the \emph{Courant bracket}, defined on the direct sum of the spaces of vector fields and $1$-forms on a manifold. In the 90's, Z. Liu, Weinstein, and P. Xu \cite{lwx:courant} formalized the properties of the Courant bracket in the definition of a \emph{Courant algebroid}.

For any manifold $M$, the standard Courant algebroid over $M$ is the bundle $TM\oplus T^*M$, equipped with following structures on its space of sections $\vect(M) \oplus \Omega^1(M)$:
\begin{enumerate}
        \item the symmetric bilinear form given by
        \[       \langle X_1+\xi_1, X_2+\xi_2\rangle=\xi_2(X_1)+\xi_1(X_2),      \]
        and

        \item the Courant bracket, given by
        \[ [X_1+\xi_1, X_2+\xi_2]=[X_1, X_2]+ L_{X_1}\xi_2 - \iota_{X_2} d\xi_1
        \]
        for $X_i \in \vect(M)$, $\xi_i \in \Omega^1(M)$.
\end{enumerate}
The Courant bracket satisfies the Jacobi identity but is not skew-symmetric.

Given a closed $3$-form $H$ on $M$, one can define a ``twisted'' version of the Courant bracket by
\[
[X_1+\xi_1, X_2+\xi_2]_H =[X_1, X_2] + L_{X_1}\xi_2 - \iota_{X_2} d\xi_1 + \iota_X\iota_Y H.
\]
P. \v{S}evera \cite{severa:letters} proved that the cohomology class $[H]\in H^3(M)$ classifies \emph{exact Courant algebroids}, i.e. those that fit into an exact sequence 
\[
0\longrightarrow T^*M\longrightarrow E \longrightarrow TM\longrightarrow 0,
\]
up to isomorphism.

A (twisted) Dirac structure is a subbundle of $TM\oplus T^*M$ that is maximally isotropic with respect to the bilinear form $\langle \cdot, \cdot \rangle$ and whose sections are closed under the (twisted) Courant bracket. When the Courant bracket is restricted to a Dirac structure, the skew-symmetry anomaly disappears, making the Dirac structure into a Lie algebroid. In \cite{bcwz:dirac}, H. Bursztyn, M. Crainic, Weinstein, and C. Zhu showed that, if the Lie algebroid associated to a Dirac structure is integrable in the sense of \cite{cf:integration}, then the Lie groupoid carries a natural closed (or $H$-closed, in the twisted case), multiplicative $2$-form, making it a \emph{presymplectic groupoid}.

In this article, we study the integration problem for Courant algebroids, which was one of open problems raised by Liu, Weinstein, and Xu \cite{lwx:courant}:
\vskip 2mm

\noindent{``{\bf Open Problem 5.} \em What is the global, groupoid-like object corresponding to a Courant algebroid? In particular, what is the double of a Poisson groupoid?''}
\vskip 2mm
From the work of \v{S}evera \cite{severa:some}, it is expected that the solution to this problem is a \emph{symplectic $2$-groupoid}. Recently, the authors \cite{mt:double}, D. Li-Bland and P. S\v{e}vera \cite{ls:integration}, and Y. Sheng and C. Zhu \cite{sz:integration} independently constructed $2$-groupoids integrating certain subclasses of Courant algebroids, with the standard Courant algebroid being the common element of all three subclasses.  For these constructions, the term ``integration'' can be justified by showing that the Courant algebroid structure can be recovered via \v{S}evera's $1$-jet construction \cite{severa:linfty}. 

An important question that has remained unaddressed is how a symplectic $2$-groupoid integrating a Courant algebroid is related to the presymplectic groupoids integrating the Dirac structures that sit inside the Courant algebroid. In fact, one can easily find examples showing that the symplectic $2$-groupoids of \cite{mt:double}, \cite{ls:integration}, and \cite{sz:integration} are not large enough to contain all the presymplectic groupoids arising from Dirac structures. In this article, we show that this problem can be resolved, at the cost of working with infinite-dimensional manifolds.

We construct, for any manifold $M$, an infinite-dimensional Lie $2$-groupoid, i.e. a Kan simplicial (Banach) manifold $\{X_\bullet\}$ for which the horn fillings are unique in degrees greater than $2$. For any closed $H \in \Omega^3(M)$, we obtain a natural multiplicative symplectic $2$-form $\omega^H_2$ on $X_2$, making $\{X_\bullet\}$ into a symplectic $2$-groupoid, which we call the \emph{Liu-Weinstein-Xu $2$-groupoid}, or $\LWX(M)$ for short. 

A brief description of $\LWX(M)$ in low degrees is as follows. The space of ``$0$-simplices'' is $\LWX_0(M) = M$. The space $\LWX_1(M)$ of ``$1$-simplices'' consists of bundle maps from the tangent bundle of the standard $1$-simplex to $T^*M$. An element of the space $\LWX_2(M)$ of ``$2$-simplices'' is given by a quadruplet $([f], \psi_0, \psi_1, \psi_2)$, where $[f]$ is a class of maps from the standard $2$-simplex to $M$, modulo boundary-fixing homotopies, and where each $\psi_i$ is an element of $\LWX_1(M)$ whose base map is the $i$th edge of $f$. In order to endow $\LWX(M)$ with a smooth structure, we require the maps to have certain fixed orders of differentiability; the details are in Section \ref{sec:2groupoid}.

Our most significant results arise from the observation that $\LWX(M)$ has a natural symplectic $2$-form $\omega^H_1$ on $\LWX_1(M)$ for which
\begin{align}\label{eqn:ddeltah}
d\omega^H_1&=\delta H,& \delta \omega^H_1&=\omega^H_2,
\end{align}
where $\delta: \Omega^\bullet(\LWX_k(M))\to \Omega^\bullet(\LWX_{k+1}(M))$ is the simplicial coboundary map. This structure seems to be specific to the case of exact Courant algebroids, so it does not appear in the general theory sketched out in \cite{severa:some}.

We associate to any integrable Dirac structure a sub-$2$-groupoid of $\LWX(M)$ whose $1$-truncation can be identified with the Lie groupoid integrating the Dirac structure. We prove that the pullback of $\omega^H_2$ vanishes on this sub-$2$-groupoid; as a result, we can deduce that the pullback of $\omega^H_1$ descends to the $1$-truncation, inducing an $H$-closed, multiplicative $2$-form on the Lie groupoid integrating the Dirac structure. This $2$-form precisely coincides with the one constructed by Bursztyn, Crainic, Weinstein, and Zhu \cite{bcwz:dirac}. We can thus view the Liu-Weinstein-Xu $2$-groupoid as being the geometric origin of presymplectic groupoids.

We prove that the sub-$2$-groupoid associated to a Dirac structure is in fact Lagrangian at the ``units'' of the $2$-groupoid. We conjecture that it is Lagrangian everywhere, and we prove the conjecture in a special case. We believe that the Lagrangian property is the origin of the nondegeneracy condition in \cite{bcwz:dirac} and therefore deserves further study.

Another issue that we do not address here is that of the relationship between $\LWX(M)$ and the finite-dimensional symplectic $2$-groupoids of \cite{ls:integration,mt:double,sz:integration}. Clearly, there should be a notion of equivalence between symplectic $2$-groupoids, but the precise nature of the equivalence remains an open question.

\begin{org}
In Section \ref{sec:2groupoid}, we construct an infinite-dimensional simplicial manifold $\{\fC_\bullet(M)\}$ associated to any manifold $M$. We show that $\{\fC_\bullet(M)\}$ can be truncated to an infinite-dimensional Lie $2$-groupoid, which we denote $\LWX(M)$. In Section \ref{sec:symplectic}, we construct canonical symplectic forms $\omega_i$ on $\LWX_i(M)$ for $i=1,2$, as well as twisted versions $\omega_i^H$ associated to any closed $3$-form $H$ on $M$.  In particular, we show that the relations \eqref{eqn:ddeltah} are satisfied. In Section \ref{sec:dirac}, we construct a simplicial manifold $\{\fG_\bullet(\cald)\}$ associated to any Dirac structure $\cald$, whose $1$-truncation is the Lie groupoid $\sG$ integrating the Dirac structure. There is a natural inclusion map $\fG_\bullet(\cald) \hookrightarrow \fC_\bullet(M)$, and we show that the pullback of $\omega_2$ vanishes, implying that $\omega_1$ induces a presymplectic structure on $\sG$. Finally, in Section \ref{sec:lagrangian}, we show that the image of $\fG_2(\cald)$ in $\LWX(M)$ is Lagrangian at the units and conjecture that it is Lagrangian everywhere.
\end{org}

\begin{ack}
We would like to thank Professor Alan Weinstein for his continuing advice and encouragement during and since our Ph.\ D. study. Tang's research is partially supported by NSF grant 0900985, and NSA grant H96230-13-1-02.
\end{ack}

\section{Construction of $\LWX(M)$}\label{sec:2groupoid}
In this section, we describe the construction of the Liu-Weinstein-Xu $2$-groupoid, by first constructing an infinite-dimensional simplicial manifold, and then truncating it to obtain a Lie $2$-groupoid.

The basic idea of the construction in this section goes back to D. Sullivan \cite{sullivan}. The application of Sullivan's idea to the integration problems in Poisson geometry was described by \v{S}evera \cite{severa:letters, severa:some}, using the language of $NQ$-manifolds. In this section, we describe a direct construction that does not require any knowledge of supergeometry.

\subsection{A simplicial manifold}

Let $M$ be a manifold. Recall that, if $X$ is a manifold and $E \tolabel{\pi} M$ is a vector bundle, then a map $\phi: X \to E$ is said to be of class $C^{p,q}$ if $\phi$ is $C^q$ and $\pi \circ \phi$ is $C^p$. Clearly, if this is the case, then it is necessary that $p \geq q$.

Fix $p \geq q$. For each integer $n\geq0$, let $\fC_n(M)$ denote the set of $C^{p,q}$ bundle maps from $T\Delta^n$ to $T^*M$, where $\Delta^n$ is the standard $n$-dimensional simplex in $\R^n$.

\begin{lemma}\label{lem:banach-mfld} The space $\fC_n(M)$ of $C^{p,q}$ bundle maps from $T\Delta^n$ to $T^*M$ is a Banach manifold.  
\end{lemma}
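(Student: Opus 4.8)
The plan is to realize $\fC_n(M)$ as a space of maps into the total space of a single vector bundle and then to construct explicit Banach charts on that space. Since $\Delta^n \subset \R^n$ has canonically trivial tangent bundle $T\Delta^n = \Delta^n \times \R^n$, a bundle map $\phi \colon T\Delta^n \to T^*M$ is the same datum as a map $\Psi \colon \Delta^n \to \mathcal{E}$, where $\mathcal{E} := \mathrm{Hom}(\R^n, T^*M) \cong (T^*M)^{\oplus n}$ is the vector bundle over $M$ whose fiber at $m$ is $\mathrm{Hom}(\R^n, T^*_m M)$; here $\Psi(x)$ records the fiberwise-linear map $\phi|_{T_x \Delta^n}$, and the base map of $\phi$ is $f = \pi_{\mathcal{E}} \circ \Psi$. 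Under this identification $\phi$ is $C^{p,q}$ precisely when $\Psi$ is $C^q$ and $\pi_{\mathcal{E}} \circ \Psi$ is $C^p$, i.e. when $\Psi$ is $C^{p,q}$ as a map into $\mathcal{E}$. Thus it suffices to prove the general statement that, for a compact manifold (with corners) $X$ and a smooth vector bundle $E \to M$, the space $C^{p,q}(X, E)$ of $C^{p,q}$ maps from $X$ into the total space $E$ is a Banach manifold, and then to apply it with $X = \Delta^n$ and $E = \mathcal{E}$.

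To build charts, fix $\Psi_0 \in C^{p,q}(X, E)$ with base map $f_0 = \pi_E \circ \Psi_0$, and choose auxiliary data once and for all: a Riemannian metric on $M$, with exponential map $\exp$, and a linear connection on $E$, with parallel transport $\tau$. Any map $\Psi$ near $\Psi_0$ has base map $f$ near $f_0$, which we write as $f(x) = \exp_{f_0(x)}(u(x))$ for a $C^p$ section $u$ of $f_0^* TM$; using $\tau$ along the geodesic from $f_0(x)$ to $f(x)$ we obtain linear isomorphisms $\tau_x \colon E_{f_0(x)} \to E_{f(x)}$ depending smoothly on $u(x)$, and we set $v(x) = \tau_x^{-1} \Psi(x) \in E_{f_0(x)}$. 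This assigns to $\Psi$ the pair $(u, v)$ in the Banach space $\Gamma^p(f_0^* TM) \oplus \Gamma^q(f_0^* E)$, which is well defined because $X$ is compact and $f_0$ is $C^p$, so the pullback bundles are $C^p$ and their $C^p$ (resp. $C^q$) sections form Banach spaces. The assignment is a bijection from a neighborhood of $\Psi_0$ onto an open neighborhood of the origin, with inverse $(u,v) \mapsto \Psi$ given by $\Psi(x) = \tau_x(v(x))$; openness comes from the sup-type bounds that keep $u$ inside the injectivity domain of $\exp$. The hypothesis $p \geq q$ enters exactly here: since $\tau_x$ is assembled from the smooth maps $\exp$ and $\tau$ together with the $C^p$ datum $u$, composing with the $C^q$ section $v$ yields a $C^q$ map $\Psi$ whose base map remains $C^p$, so the chart and its inverse genuinely land in $C^{p,q}(X, E)$.

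It remains to check that the transition map between two such charts, at $\Psi_0$ and $\Psi_1$, is a smooth diffeomorphism between open subsets of the model Banach spaces. This transition map is a composite of the fiberwise maps built from $\exp$, $\tau$, and their inverses, now acting on section spaces, and its smoothness is an instance of the $\Omega$-lemma: a fiber-preserving smooth morphism of bundles induces a smooth map on spaces of $C^k$ sections over a compact base. The main obstacle, and the only genuinely nonroutine point, is to run this argument in the mixed-regularity setting, where the base component has regularity $C^p$ and the fiber component has regularity $C^q$ with $p \geq q$: one must verify that the induced map respects this splitting and that differentiation in the Banach-space sense does not raise the order of differentiation demanded of the sections. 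Granting the appropriate $C^{p,q}$ version of the $\Omega$-lemma, the charts are mutually compatible and cover $\fC_n(M)$, completing the proof. The manifold-with-corners structure of $\Delta^n$ causes no difficulty, since everything above is carried out for $C^{p,q}$ maps out of a fixed compact base while the target manifolds are boundaryless.
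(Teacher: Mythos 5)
Your proposal takes essentially the same approach as the paper: both use the canonical trivialization $T\Delta^n = \Delta^n \times \R^n$ to identify $\fC_n(M)$ with the space of $C^{p,q}$ maps from $\Delta^n$ into a vector bundle over $M$ (your $\mathcal{E} = \mathrm{Hom}(\R^n, T^*M)$ is precisely the paper's $n$-fold Whitney sum $T^*_nM$), noting that the identification preserves the $C^{p,q}$ class. The paper then simply cites the known fact that this mapping space is a Banach manifold, while you sketch the standard exponential-map-and-parallel-transport chart construction for it; your sketch is sound, with the $C^{p,q}$ version of the $\Omega$-lemma that you explicitly grant being exactly the fact the paper takes as known.
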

\begin{proof} 
Let $T^{*}_nM$ be the $n$-fold direct sum of $T^*M$. That is,
\[
T^*_n M := \underbrace{T^*M\oplus\cdots \oplus T^*M}_n. 
\] 
Then we may use the standard trivialization $T\Delta^n = \Delta^n \times \R^n$ to obtain a one-to-one correspondence between bundle maps $\varphi:T\Delta^n\to T^*M$ and maps $\tilde{\varphi}:\Delta\to T^*_nM$. Specifically, given such a $\varphi$, we obtain $\tilde{\varphi}$ by evaluating $\varphi$ on the standard basis vectors of $\R^n$. This correspondence preserves order of differentiability, in that $\varphi$ is $C^{p,q}$ if and only if $\tilde{\varphi}$ is $C^{p,q}$. 

The statement immediately follows from the fact that the space of $C^{p,q}$-maps from $\Delta^n$ to $T^*_nM$ is a Banach manifold. 
\end{proof}

There is a cosimplicial manifold structure on $\{T\Delta^\bullet\}$, obtained by applying the tangent functor to the standard cosimplicial manifold $\{\Delta^\bullet\}$. Thus there is an induced simplicial manifold structure on $\{\fC_\bullet(M)\}$. For each $n$, we will use $d_i : \fC_n(M) \to \fC_{n-1}(M)$, $0 \leq i \leq n$, to denote the face maps and $s_i : \fC_n(M) \to \fC_{n+1}(M)$, $0 \leq i \leq n$ to denote the degeneracy maps.

We note that $\fC_0(M) = M$, and that $\fC_1(M) = \{ C^{p,q} \mbox{ bundle maps } T[0,1] \to T^*M\}$ can be identified with the space of $C^{p,q}$ paths on $T^*M$. 

\subsection{$2$-groupoid truncation} 
Recall that a simplicial manifold satisfies the \emph{Kan condition} if all the horn maps are surjective submersions. 

\begin{prop}\label{prop:kan}The simplicial manifold $\{\fC_\bullet(M)\}$ satisfies the Kan condition. 
\end{prop}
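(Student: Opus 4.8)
The plan is to verify the Kan condition directly, by constructing for each pair $(n,k)$ an explicit \emph{filling operator}, i.e.\ a smooth global section of the horn map $h_{n,k}\colon \fC_n(M)\to \fC(\Lambda^n_k)$, where $\fC(\Lambda^n_k)$ denotes the Banach manifold of $(n,k)$-horns. Here a horn is a compatible family $(\varphi_i)_{i\neq k}$ of $C^{p,q}$ bundle maps $T\Delta^{n-1}\to T^*M$ satisfying the simplicial matching identities $d_j\varphi_i=d_{i-1}\varphi_j$ for $i<j$, $i,j\neq k$; that this space is a Banach manifold and that $h_{n,k}$ is smooth follow, respectively, from the argument of Lemma \ref{lem:banach-mfld} and from the fact that the face maps are precomposition with the fixed cosimplicial maps $T\delta^i$. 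Granting such a section $\sigma_{n,k}$, surjectivity of $h_{n,k}$ is immediate, and differentiating $h_{n,k}\circ\sigma_{n,k}=\mathrm{id}$ shows that $Dh_{n,k}$ admits a continuous linear right inverse $D\sigma_{n,k}$, hence is a split surjection, so that $h_{n,k}$ is a submersion; this is exactly the Kan condition.

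To build $\sigma_{n,k}$ I would first unwind the data. Using the trivialization $T\Delta^n=\Delta^n\times\R^n$ as in Lemma \ref{lem:banach-mfld}, a bundle map $\varphi$ is the same as a base map $f\colon\Delta^n\to M$ of class $C^p$ together with covector fields $\alpha_1,\dots,\alpha_n$ along $f$ of class $C^q$. Since $d_i\varphi=\varphi\circ T\delta^i$ records only the values of $\varphi$ on vectors \emph{tangent} to the $i$th face, the horn prescribes the base map on the geometric horn $|\Lambda^n_k|\subset\Delta^n$ together with the tangential part of the $\alpha_j$ along the faces $i\neq k$, while the transverse derivative directions are completely free. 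The filling therefore splits into two extension problems: extend the base map from $|\Lambda^n_k|$ to $\Delta^n$, and then extend the partially prescribed covector fields.

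For the base map I would use that $|\Lambda^n_k|$ is a retract of $\Delta^n$: radial projection from a point in the relative interior of the missing $k$th facet carries each point of $\Delta^n$ along a ray onto the union of the remaining facets, giving a retraction $\rho\colon\Delta^n\to|\Lambda^n_k|$ that restricts to the identity on the horn. To keep the construction manifestly smooth and linear in the horn data, I prefer to combine this with a tubular-neighborhood argument: embed $M\hookrightarrow\R^N$, apply a continuous linear $C^p$-extension operator (of Whitney--Seeley type, available for these corner domains) to the $\R^N$-valued horn base map, and compose with the nearest-point projection of a tubular neighborhood of $M$, which is legitimate because the extension stays $C^0$-close to $M$. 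Extending the covector fields is then a linear problem over the extended base map: prescribe the tangential components dictated by the horn and choose the transverse components by the same extension operator, using that $T^*M\to M$ is a vector bundle so the constraints are fiberwise linear. Both steps produce operators that are continuous and linear (or affine) in the horn data, hence smooth maps of Banach manifolds, and their composite is the desired section $\sigma_{n,k}$.

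The main obstacle I anticipate is regularity at the lower-dimensional strata. The geometric horn is a manifold with corners, the retraction $\rho$ is only piecewise smooth across the ridges where two facets meet, and one must check that the extended base map and covector fields genuinely lie in $C^{p,q}$ rather than merely being continuous there. Settling this requires controlling all derivatives transverse to the ridges---this is precisely where the simplicial matching identities $d_j\varphi_i=d_{i-1}\varphi_j$ enter---and, if necessary, smoothing the extension operator near the seams; relatedly, one must confirm that the chosen operator is bounded between the relevant $C^{p,q}$ Banach spaces so that $\sigma_{n,k}$ is smooth. Once these regularity points are settled, the formal conclusion that $h_{n,k}$ is a surjective submersion follows from the section argument above, uniformly in $n$ and $k$.
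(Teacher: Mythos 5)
There is a genuine gap, and it sits exactly at the point you flag as ``the main obstacle'' and then defer: the seam regularity along the ridges of the horn is not a technical loose end but the entire content of the proof, and your toolkit does not resolve it. First, your claim that ``the transverse derivative directions are completely free'' is false on the ridges: at a point of the subface $F_{\{i,j\}}$ where faces $i$ and $j$ meet, a direction transverse to face $i$ is \emph{tangent} to face $j$ and hence prescribed by $\psi_j$ (and likewise for the higher transverse jets of the base map, up to order $p$). Consequently, before any Whitney--Seeley operator can be invoked you must first construct a single coherent Whitney jet on the $(n-1)$-dimensional complex $|\Lambda_{n,\ell}|$ satisfying all these cross-face constraints; Seeley-type operators extend from corner \emph{domains}, not from lower-dimensional complexes without a jet, so citing them does not close this. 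Second, the piecewise-smoothness of the radial retraction is not removable by smoothing: no smooth retraction of $\Delta^n$ onto the horn exists at all, since being the identity on two faces meeting at a corner forces the differential there to be the identity, contradicting the image being $(n-1)$-dimensional. The mechanism that actually solves the coherence problem --- and which is absent from your sketch --- is the inclusion--exclusion filling formula used in the paper: with $p_I$ the globally \emph{affine} collapse of $\Delta^n$ onto the subface $F_I$ and parallel transport (for a chosen Riemannian metric) identifying cotangent fibers, one sets
\[ \varphi \;=\; \sum_{\emptyset \neq I \subset \{0,\dots,n\}\smallsetminus\{\ell\}} (-1)^{|I|+1}\, \psi_I \circ Tp_I, \]
so that each summand is globally $C^{p,q}$ (no seams, since each $p_I$ is affine on all of $\Delta^n$) and the alternating sum arranges $d_i\varphi = \psi_i$ for $i \neq \ell$.

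Two further gaps. (a) Your global section does not exist as described: the Seeley extension of an arbitrary horn base map has no reason to remain inside the tubular neighborhood of $M \subset \R^N$ away from the horn, so the nearest-point projection is undefined; this yields at best \emph{local} sections near a given map, leaving surjectivity --- a filler for \emph{every} horn --- unproven, and since $M$ is nonlinear you cannot repair it by linearity of the operator. The paper sidesteps precisely this by factoring through the base: it quotes \cite[Lemma 5.7]{H} that $S_\bullet(M)$ is Kan and \cite[Lemma 2.8]{H} to reduce to showing $\fC_\bullet(M) \to S_\bullet(M)$ is a Kan fibration, i.e.\ to filling relative to a \emph{given} base map $f$, where the problem is fiberwise linear and the displayed formula applies. (b) Even granting a global section $\sigma_{n,k}$, differentiating $h_{n,k}\circ\sigma_{n,k} = \mathrm{id}$ only splits $Dh_{n,k}$ \emph{along the image of} $\sigma_{n,k}$; a submersion requires local sections through every $\varphi \in \fC_n(M)$, which the paper constructs by parallel transporting $\varphi$ under small changes of the base and adding the linear filling of the difference of horns. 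This last point is repairable within your framework (your operators are affine in the fiber data), but as written the submersion claim is incomplete, while points (a) and the jet-coherence problem in the first paragraph are where the proposal would genuinely fail without importing the paper's (or an equivalent) construction.
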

\begin{proof}For each $n$, let $S_n(M)$ denote the set of $C^p$ maps from $\Delta^n$ to $M$. It is known (see, for example, \cite[Lemma 5.7]{H}) that $\{S_\bullet(M)\}$ is a Kan simplicial manifold. There is a natural projection map $\{\fC_\bullet(M)\} \to \{S_\bullet(M)\}$, so, by \cite[Lemma 2.8]{H}, it suffices to show that this map is a Kan fibration. In this setting, the Kan fibration condition is as follows.

For $0 \leq \ell \leq n$, let $\Lambda_{n,\ell}$, applied to any simplicial manifold, denote the space of $n$-dimensional horns where the $\ell$th face is omitted. There is a natural map $\fC_n(M) \to S_n(M) \times_{\Lambda_{n,\ell}(S(M))} \Lambda_{n,\ell}(\fC(M))$, taking $\varphi \in \fC_n(M)$ to $(\bar{\varphi}, (d_0 \varphi, \dots, \widehat{d_\ell \varphi}, d_n \varphi))$, where $\bar{\varphi}$ is the underlying base map of $\varphi$. The Kan fibration condition requires that this map be a surjective submersion for all $\ell$ and $n$, which we will prove using a method similar to that used in \cite[Lemma 5.7]{H}.

Let $f: \Delta^n \to M$ be a $C^p$ map, and let $\psi_i: T\Delta^{n-1} \to T^*M$, $i \neq \ell$ be a collection of $C^{p,q}$ maps forming a horn in $\Lambda_{n,\ell}(\fC(M))$ that is compatible with $f$. For each nonempty $I \subset \{0, \dots, n\} \smallsetminus \{\ell\}$, let $F_I \subset \Delta^n$ denote the $(n-|I|)$-dimensional subface whose vertices are $\{0,\dots,n\} \smallsetminus I$. As a result of the horn compatibility conditions, the maps $\psi_i$ induce well-defined $C^{p,q}$ maps $\psi_I: TF_I \to T^*M$. 

For each $I$, let $p_I: \Delta^n \to F_I$ be the affine projection map collapsing the vertices in $I$ onto $\ell$. Fix a Riemannian metric on $M$. Then, for each $t \in \Delta^n$, we may use parallel transport along the image of the line from $t$ to $p_I(t)$ to identify $T^*_{f(t)}M$ with $T^*_{f(p_I(t))}M$. 

We now define a map $\varphi: T\Delta^n \to T^*M$ with base map is $f$, given by 
\[ \varphi = \sum_{I \subset \{0, \dots, n\} \smallsetminus \{\ell\}} (-1)^{|I|+1} \psi_I \circ Tp_I.\]
It is clear by construction that $\varphi$ is $C^{p,q}$, and it follows from the identities satisfied by the projection and face maps that $d_i \varphi = \psi_i$ for each $i \neq \ell$. This proves surjectivity.

To show that the map is a submersion, we observe that, under a sufficiently small change in $f$, we can use parallel transport to accordingly change any horn filling $\varphi$, and under a change in $\{\psi_i\}$, we can apply the above construction to the difference to accordingly change $\varphi$. This process gives a local section through any $\varphi \in \fC_n(M)$.
\end{proof}

Recall that an \emph{$n$-groupoid} is a Kan simplicial set for which the horn-fillings are unique in dimensions greater than $n$. A \emph{Lie $n$-groupoid} is a Kan simplicial manifold satisfying the same condition. 

Duskin \cite{duskin} introduced a \emph{truncation} functor $\tau_{\leq n}$ which may be applied to a Kan simplicial set $\{X_\bullet\}$ to produce an $n$-groupoid. It is defined as follows:
\begin{itemize}
\item $(\tau_{\leq n} X)_m=X_m$ for $m<n$;

\item $(\tau_{\leq n} X)_n=X_n \modsim$, where $x \sim y$ if and only if there exists $z \in X_{n+1}$ such that $d_n z = x$, $d_{n+1} z = y$, and $d_i z \in \im(s_{n-1})$ for $0 \leq i < n$;

\item $(\tau_{\leq n} X)_m=X_m \modsim$ for $m>n$, where $x \sim y$ if and only if the $n$-skeletons of $x$ and $y$ are equivalent with respect to the equivalence relation on $X_n$.
\end{itemize}

The following result can be found in \cite[Lemma 3.6]{H} and \cite[Section 2]{Z}. 

\begin{lemma}\label{lem:truncation} If $\{X_\bullet\}$ is a Kan simplicial manifold, then $\tau_{\leq n}X$ is an $n$-groupoid. If, furthermore, $X_n \modsim$ is a manifold, then $\tau_{\leq n}X$ is a Lie $n$-groupoid. 
\end{lemma}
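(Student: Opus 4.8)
The plan is to handle the two assertions in turn, following Duskin \cite{duskin} for the combinatorial statement and Henriques \cite{H} and Zhu \cite{Z} for the passage to the smooth category. First I would verify that the relation $\sim$ on $X_n$ is genuinely an equivalence relation, which is where the Kan condition does its work. Reflexivity is clean: taking the witness to be $z = s_n x$, the simplicial identities give $d_n z = d_{n+1} z = x$, while for $i < n$ one has $d_i s_n x = s_{n-1} d_i x \in \im(s_{n-1})$. For symmetry and transitivity I would assemble the given witnesses together with suitable degenerate simplices into horns of dimension $n+2$, fill them using the Kan property of $X$, and read off the desired new witness as an appropriate face of the filler. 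One then checks that $\sim$ is compatible with the face and degeneracy maps so that they descend and the simplicial identities persist; this makes $\tau_{\leq n}X$ a simplicial set, agreeing with $X$ in degrees below $n$.

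Next I would check the Kan condition and, crucially, uniqueness of fillings above degree $n$. Existence of fillings is inherited from $X$: lift a horn of $\tau_{\leq n}X$ to a horn of $X$, fill it, and project, where the only subtlety is that chosen representatives of the faces must first be adjusted by $\sim$ so as to assemble into a genuine horn of $X$, which again uses the Kan property. Uniqueness is the defining feature of an $n$-groupoid. For $m \geq n+2$ it is almost formal: an $m$-simplex of $\tau_{\leq n}X$ is by definition determined by the equivalence class of its $n$-skeleton, and a horn $\Lambda_{m,\ell}$ with $m \geq n+2$ already contains every $n$-face, so the filler's $n$-skeleton is forced. The delicate case is $m = n+1$, where the omitted face is itself $n$-dimensional: here I must show that if two fillers $z, z'$ agree on all faces $d_i$ with $i \neq \ell$ as classes in $X_n \modsim$, then also $d_\ell z \sim d_\ell z'$, so that $z$ and $z'$ have equivalent $n$-skeletons. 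This is precisely the claim that the missing face is determined up to $\sim$ by the remaining ones, which I would establish by packaging the known equivalences and the simplicial identities into a single higher-dimensional horn whose filler produces the required witness. I expect this step to be the main combinatorial obstacle.

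Finally, assuming $X_n \modsim$ is a manifold, I would upgrade $\tau_{\leq n}X$ to a Lie $n$-groupoid. In degrees below $n$ the spaces are unchanged and hence manifolds, and in degree $n$ the space $X_n \modsim$ is a manifold by hypothesis; this hypothesis is essential because the quotient by $\sim$ is not automatically smooth. For $m > n$ the uniqueness of fillings identifies $(\tau_{\leq n}X)_m$ with a matching space, that is, an iterated fiber product of the lower-degree manifolds cut out by the face-compatibility conditions, reflecting the fact that $\tau_{\leq n}X$ is $n$-coskeletal. The Kan submersion property of the original simplicial manifold guarantees that the maps defining these fiber products are transverse, so that the higher spaces are again manifolds and the induced face and degeneracy maps are smooth; the same submersion property, transported through the quotient, shows that the horn maps of $\tau_{\leq n}X$ are surjective submersions. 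The real obstacle here is smoothness of the quotient at the higher levels, but once the degree-$n$ quotient is assumed smooth the fiber-product description renders the higher levels automatic, so all the genuine content is funneled into the single hypothesis on $X_n \modsim$.
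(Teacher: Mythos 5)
Your outline is correct and takes essentially the same route as the paper's sources: the paper itself gives no proof of this lemma, instead citing \cite[Lemma 3.6]{H} and \cite[Section 2]{Z}, and your three steps---the homotopy-rel-boundary equivalence relation established via Kan fillers, the delicate $m=n+1$ uniqueness handled by assembling an $(n+2)$-dimensional horn, and the identification of the levels above $n$ with matching-space fiber products to obtain the smooth structure---are precisely the standard argument found there. One caveat: your final paragraph tacitly requires the projection $X_n \to X_n \modsim$ to be a surjective submersion (so that the submersion property of the horn maps can be ``transported through the quotient''), which does not follow from the bare hypothesis that $X_n \modsim$ is a manifold; the cited references include this as part of the hypothesis, and in the present paper's application it is supplied by the explicit chart description of the quotient in Lemma \ref{lem:covering}.
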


By applying Lemma \ref{lem:truncation} to $\{\fC_\bullet(M)\}$ for $n=2$, we obtain the \emph{Liu-Weinstein-Xu $2$-groupoid} $\LWX(M) := \tau_{\leq 2} \fC(M)$. The main result of this section is the following:
\begin{thm}\label{thm:truncation}
The quotient $\mathfrak{C}_2(M) \modsim$ is a Banach manifold, and therefore $\LWX(M)$ is a Lie $2$-groupoid. 
\end{thm}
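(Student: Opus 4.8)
The plan is to apply Lemma~\ref{lem:truncation} with $n=2$: since $\{\fC_\bullet(M)\}$ is Kan by Proposition~\ref{prop:kan}, it remains only to prove that the quotient $\fC_2(M)\modsim$ is a Banach manifold, after which the assertion that $\LWX(M)$ is a Lie $2$-groupoid is immediate. My strategy is first to identify the equivalence relation $\sim$ explicitly, and then to build charts realizing the quotient as a covering space of a Banach manifold of ``boundary data.''

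First I would make the relation concrete. Given $x,y\in\fC_2(M)$ with a witnessing $z\in\fC_3(M)$ (so $d_2z=x$, $d_3z=y$, and $d_0z,d_1z\in\im(s_1)$), a direct application of the simplicial identities $d_id_j=d_{j-1}d_i$ for $i<j$, together with $d_1s_1=d_2s_1=\mathrm{id}$, shows that $d_ix=d_iy$ for $i=0,1,2$; that is, equivalent $2$-simplices have identical boundary bundle maps. Passing to base maps, the two degenerate faces of $\bar z$ pin the boundary, so $\bar z$ exhibits a homotopy rel $\partial\Delta^2$ between $\bar x$ and $\bar y$. Conversely, given $x,y$ with the same boundary and with base maps homotopic rel boundary, I would construct an explicit witness $z$: take a ``thin'' base map $\bar z\colon\Delta^3\to M$ built from the homotopy (constant in the directions collapsed on the faces $d_0,d_1$), and extend it to a $C^{p,q}$ bundle map $z\colon T\Delta^3\to T^*M$ with the prescribed faces $x,y$ and degenerate faces $d_0z,d_1z$, using parallel transport along the homotopy as in the proof of Proposition~\ref{prop:kan}. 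Thus $x\sim y$ if and only if $\partial x=\partial y$ and $\bar x$, $\bar y$ are homotopic rel boundary.

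Next I would organize the quotient. Let $\Lambda\subset\fC_1(M)^{3}$ be the fiber product cut out by the condition that the three base edges close up to a map $\partial\Delta^2\to M$; this is a Banach manifold, and the subset $\Lambda'\subset\Lambda$ of boundaries whose base loop is null-homotopic is a union of connected components, hence again a Banach manifold. The boundary map $\partial=(d_0,d_1,d_2)\colon\fC_2(M)\to\Lambda'$ is a surjective submersion, by the parallel-transport argument used for the horn maps in Proposition~\ref{prop:kan} (the cotangent fiber being a vector space, the bundle part is always fillable over a fillable base). By the previous paragraph, $\partial$ descends to $\bar\partial\colon\fC_2(M)\modsim\to\Lambda'$ whose fibers are the sets of rel-boundary homotopy classes of fillings, i.e. torsors over $\pi_2(M)$. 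To chart the quotient near a class $c_0=\pi(\varphi_0)$, I would take a contractible open $V\subset\Lambda'$ around $\partial\varphi_0$ and, using that $\partial$ is a submersion, a smooth section $\beta\colon V\to\fC_2(M)$ of $\partial$ whose base maps all lie in the continuation of the homotopy class of $\bar\varphi_0$; contractibility of $V$ guarantees this class is well defined and locally constant. Then $\pi\circ\beta\colon V\to\fC_2(M)\modsim$ is a bijection onto a neighborhood of $c_0$, and I would take its inverse as a chart. The transition maps are controlled by $\bar\partial$ and restrict to the identity in the $\Lambda'$-coordinate on each overlapping sheet, hence are smooth; this exhibits $\fC_2(M)\modsim$ as a Banach manifold, in fact as a covering of $\Lambda'$ with discrete fiber a $\pi_2(M)$-torsor, which with Lemma~\ref{lem:truncation} completes the proof.

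I expect the main obstacle to be the bookkeeping of the homotopy-class datum: proving that the assignment sending a boundary to its set of fillings-up-to-homotopy is a genuine covering (local triviality of the sheets), so that the local sections $\beta$ with prescribed homotopy class exist and patch into a coherent atlas whose transitions are smooth. A secondary technical point is the explicit construction in the converse half of the relation, namely producing a $C^{p,q}$ bundle map on $T\Delta^3$ with two prescribed faces, two degenerate faces, and the correct order of differentiability; but this is a controlled extension problem of the same type already handled in Proposition~\ref{prop:kan}.
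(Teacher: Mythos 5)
Your proposal is correct and follows essentially the same route as the paper: the paper likewise characterizes the Duskin relation as ``same $1$-skeleton plus rel-boundary homotopic base maps'' (Lemma \ref{lem:covering}) and realizes $\LWX_2(M)$ as the fiber product, over $C^p(\boundary \Delta^2;M)$, of the Banach manifold $\fB_0(M)$ of boundary triangles (your $\Lambda'$, Lemmas \ref{lem:anchorsub}--\ref{lemma:pifbsub}) with the covering $S_2(M)\modsim$ --- which is precisely your covering of $\Lambda'$ with $\pi_2(M)$-torsor fibers, packaged as a pullback rather than built from charts. Even the point you flag as the main obstacle (local triviality of the homotopy-class sheets) is the same detail the paper delegates to a footnote asserting that $S_2(M)\modsim$ covers the contractible component of $C^p(\boundary \Delta^2;M)$.
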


Section \ref{sec:truncationproof} is devoted to proving Theorem \ref{thm:truncation}.

\subsection{Proof of Theorem \ref{thm:truncation}}\label{sec:truncationproof}

Without loss of generality, we will assume that $M$ is connected.

\begin{lemma} \label{lem:anchorsub}
The map $\fC_1(M) \to M \times M$ given by $\psi \mapsto (d_0 \psi, d_1 \psi)$ is a surjective submersion.
\end{lemma}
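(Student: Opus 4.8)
The plan is to recognize the map in question as endpoint-evaluation on base paths and to verify surjectivity and the submersion property separately, exploiting that the target $M \times M$ is finite-dimensional. Concretely, if $\psi \in \fC_1(M)$ has base path $\gamma = \bar\psi : [0,1] \to M$, then $d_0\psi$ and $d_1\psi$ are the values of $\gamma$ at the two vertices, so the map is $\psi \mapsto (\gamma(0), \gamma(1))$. Surjectivity is immediate: since $M$ is assumed connected, any pair $(x_0, x_1)$ is joined by a $C^p$ path $\gamma$, and the zero bundle map over $\gamma$ (sending every tangent vector to the zero covector) is a $C^{p,q}$ element of $\fC_1(M)$ with the prescribed endpoints. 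Because $M \times M$ is finite-dimensional, a smooth map into it is a submersion as soon as its differential is everywhere surjective: the kernel is then a closed subspace of finite codimension in the Banach space $T_\psi \fC_1(M)$, hence automatically complemented, and the implicit function theorem supplies local sections.

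It therefore remains to show that the differential $d(d_0 \times d_1)_\psi : T_\psi \fC_1(M) \to T_{x_0} M \oplus T_{x_1} M$ is surjective for every $\psi$, where $x_i = d_i\psi$. The tangent space $T_\psi \fC_1(M)$ consists of $C^{p,q}$ variations of the bundle map $\psi$, and the differential records how the two endpoints of the base path move. To hit an arbitrary $(v_0, v_1) \in T_{x_0}M \oplus T_{x_1}M$, I would fix a connection on $M$, choose any $C^p$ vector field $V$ along $\gamma$ with $V(0) = v_0$ and $V(1) = v_1$ (for instance by parallel-transporting $v_0$ and $v_1$ inward and interpolating with a smooth partition of the interval), and lift $V$ to a variation of $\psi$ using a connection on $T^*M$. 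This produces a tangent vector at $\psi$ projecting to $(v_0, v_1)$, establishing surjectivity of the differential.

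To make the submersion property concrete and to produce honest local sections, I would construct, for $\psi_0 \in \fC_1(M)$ with endpoints $(x_0, x_1)$, a smooth map $(y_0, y_1) \mapsto \psi_{(y_0, y_1)}$ on a neighborhood of $(x_0, x_1)$ in $M \times M$, with $\psi_{(x_0,x_1)} = \psi_0$ and endpoints $(y_0, y_1)$. Fix a Riemannian metric on $M$, and choose a cutoff function supported in a small collar near $t = 0$ and another near $t = 1$, chosen so that the two collars are disjoint. For $y_0$ near $x_0$, set $u_0 = \exp_{x_0}^{-1}(y_0)$ and deform the base path on the first collar by sliding its initial segment along the geodesic variation determined by $u_0$ and the cutoff, so that the new path starts at $y_0$ and is unchanged outside the collar; deform near $t = 1$ symmetrically using $y_1$. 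The covector part of $\psi_0$ is carried along the deformation by parallel transport, which keeps $\psi_{(y_0, y_1)}$ of class $C^{p,q}$ and makes it depend smoothly on $(y_0, y_1)$.

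The main obstacle is this last construction: one must check that the deformation depends smoothly on $(y_0, y_1)$ as a map into the Banach manifold $\fC_1(M)$ and that it preserves the precise regularity class $C^{p,q}$. Localizing the deformation to disjoint collars is what decouples the two endpoints and keeps the base path $C^p$ while leaving the covector data $C^q$; transporting covectors by parallel transport, rather than by an arbitrary trivialization, is what makes the fiberwise data vary smoothly and respect the differentiability constraint $p \geq q$. Everything else, namely surjectivity of the map and of its differential, is routine once the endpoint description of the face maps is in hand.
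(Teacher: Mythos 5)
Your proof is correct and takes essentially the same route as the paper's: both establish surjectivity from connectedness (the paper does not even bother with the zero lift, but that is the implicit point) and prove the submersion property by constructing explicit local sections through any $\psi \in \fC_1(M)$ using a Riemannian metric, the exponential map, and parallel transport of the covector data along the deformation. The only differences are cosmetic --- the paper deforms the base path globally by the affine interpolation $v(t) = (1-t)v_0 + tv_1$ of the endpoint displacements rather than by cutoffs in disjoint collars, and it produces the local sections directly, making your preliminary reduction via surjectivity of the differential and the finite-codimensional splitting argument unnecessary (though valid).
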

\begin{proof}
Surjectivity follows from the assumption that $M$ is connected. To prove that the map is a submersion, we will describe a way to construct local sections.

Recall that $\fC_1(M)$ can be identified with the space of $C^{p,q}$ paths on $T^*M$. Let $\psi$ be such a path. Choose a Riemannian metric on $M$, and, for $i=1,2$, let $U_i$ be a neighborhood of $d_i \psi$ for which the exponential map is a diffeomorphism. Using the exponential map and parallel transport along the base path $\bar{\psi}:[0,1] \to M$, we may then identify the neighborhood $U_1 \times U_2$ of $(d_0 \psi, d_1 \psi)$ with a neighborhood of $(0,0)$ in $T_{d_0 \psi} M \times T_{d_0 \psi} M$.

For any $(v_0,v_1) \in T_{d_0 \psi} M \times T_{d_0 \psi} M$, we may (again using parallel transport) view $v(t) := (1-t)v_0 + tv_1$ as a vector field along $\bar{\psi}$. By exponentiating $\bar{\psi}$ in the direction of $v(t)$ and parallel transporting the cotangent vectors of $\psi$, we obtain a path $\psi' \in \fC_1(M)$ for which $(d_0 \psi', d_1 \psi') = (v_0,v_1)$. This process provides a well-defined local section through $\psi$.
\end{proof}

Let $\fB(M)$ be defined as the space of ``triangles'' of paths in $\fC_1(M)$. More precisely, $\fB(M)$ is the space of triples $(\psi_0,\psi_1,\psi_2)$, $\psi_i \in \fC_1(M)$, such that 
\begin{align}\label{eqn:triangle}
d_0 \psi_2 &= d_1 \psi_0, & & d_0 \psi_1 = d_0 \psi_0, & d_1 \psi_1 = d_1 \psi_2.
\end{align}

\begin{lemma}\label{lem:fbbanach}
$\fB(M)$ is a Banach manifold.
\end{lemma}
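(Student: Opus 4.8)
The plan is to exhibit $\fB(M)$ as an iterated fiber product of copies of $\fC_1(M)$ taken along the endpoint maps $d_0, d_1 : \fC_1(M) \to M$, and then to invoke the standard fact that the pullback of a surjective submersion of Banach manifolds along an arbitrary smooth map is again a Banach manifold. The three conditions in \eqref{eqn:triangle} assert precisely that the paths $\psi_0, \psi_1, \psi_2$ close up into a triangle, each condition identifying one endpoint of one path with an endpoint of another; the strategy is to impose these conditions one fiber product at a time.

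First I would record that each of $d_0, d_1 : \fC_1(M) \to M$ is individually a surjective submersion. Indeed, by Lemma \ref{lem:anchorsub} the map $(d_0, d_1): \fC_1(M) \to M \times M$ is a surjective submersion, and each $d_i$ is obtained from it by post-composing with a coordinate projection $M \times M \to M$, which is itself a surjective submersion; the composite is therefore a surjective submersion. Moreover, the explicit local sections constructed in the proof of Lemma \ref{lem:anchorsub} show that these submersions are split, i.e. locally equivalent to projections, which is exactly what is needed to run the pullback construction in the Banach category.

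Next I would build $\fB(M)$ in two steps. In the first step, form $W := \{(\psi_0,\psi_2) \in \fC_1(M) \times \fC_1(M) : d_1 \psi_0 = d_0 \psi_2\}$, the fiber product over $M$ of $d_1$ (in the $\psi_0$ variable) and $d_0$ (in the $\psi_2$ variable); this imposes the first condition of \eqref{eqn:triangle}. Since $d_0$ is a (split) surjective submersion, $W$ is a Banach submanifold of $\fC_1(M) \times \fC_1(M)$. In the second step, consider the map $W \to M \times M$ given by $(\psi_0, \psi_2) \mapsto (d_0 \psi_0, d_1 \psi_2)$ together with the map $(d_0, d_1): \fC_1(M) \to M \times M$ applied to a third path $\psi_1$. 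The two remaining conditions $d_0 \psi_1 = d_0 \psi_0$ and $d_1 \psi_1 = d_1 \psi_2$ say exactly that $\fB(M)$ is the fiber product of these two maps over $M \times M$. Because the latter map is the surjective submersion of Lemma \ref{lem:anchorsub}, this fiber product is a Banach manifold, and identifying it with $\fB(M)$ completes the argument. Note that only the $\psi_1$-map needs to be a submersion, so no submersion property of $W \to M \times M$ is required.

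The only point requiring genuine care — and the place where the Banach setting, rather than the finite-dimensional one, must be handled honestly — is the justification of the pullback theorem: each submersion used above must be split, so that near any point it is locally a projection $U \times F \to U$ and the fiber product inherits a chart. This is precisely what the local sections from Lemma \ref{lem:anchorsub} furnish, so no new analytic input is needed beyond that lemma. Everything else is the formal bookkeeping of matching the three equations in \eqref{eqn:triangle} to the three shared vertices of the triangle.
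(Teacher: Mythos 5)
Your proof is correct and takes essentially the same route as the paper: your space $W$ is exactly the horn space $\Lambda_{2,1}(\fC(M))$, and realizing $\fB(M)$ as the fiber product over $M \times M$ of this horn space with $(d_0,d_1):\fC_1(M) \to M \times M$, invoking the surjective submersion of Lemma \ref{lem:anchorsub}, is precisely the paper's argument. The only cosmetic difference is that you justify the first fiber product by deducing submersivity of $d_0$ from Lemma \ref{lem:anchorsub}, whereas the paper cites the general fact that face maps of the simplicial manifold are surjective submersions (via Proposition \ref{prop:kan}).
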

\begin{proof}
Recall (see, for example, \cite[Lemma 4.4]{H}) that Banach manifolds are closed under fiber products where one of the maps is a surjective submersion. Since the face maps of a simplicial manifold are surjective submersions, the horn space $\Lambda_{2,1}(\fC(M))$, consisting of pairs $(\psi_0,\psi_2) \in \fC_1(M) \times \fC_1(M)$ satisfying the first equation in \eqref{eqn:triangle}, is a Banach manifold.

We may view $\fB(M)$ as the fiber product over $M \times M$ of $\fC_1(M)$ and $\Lambda_{2,1}(\fC(M))$, where the fiber product imposes the latter two equations in \eqref{eqn:triangle}. Since this fiber product involves the surjective submersion $\fC_1(M) \to M \times M$ from Lemma \ref{lem:anchorsub}, it follows that $\fB(M)$ is a Banach manifold.
\end{proof}

Let $\pi_\fB: \fB(M) \to C^p(\boundary \Delta^2; M)$ be the map taking $(\psi_0,\psi_1,\psi_2) \in \fB(M)$ to its base map $(\bar{\psi}_0,\bar{\psi}_1,\bar{\psi}_2)$.

\begin{lemma}\label{lemma:pifbsub}
The map $\pi_\fB$ is a surjective submersion.
\end{lemma}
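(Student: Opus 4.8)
The plan is to establish surjectivity and the submersion property separately, closely following the local-section technique used in Lemma~\ref{lem:anchorsub}. The structural observation that makes everything work is that the compatibility conditions \eqref{eqn:triangle} defining $\fB(M)$ constrain only the \emph{endpoints} of the base paths, which are points of $\fC_0(M) = M$; they impose no condition on the cotangent data of the $\psi_i$, and in particular they require no matching of the cotangent components at the shared vertices. Consequently, any triple of base paths forming the boundary of a triangle lifts freely to $\fB(M)$.

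Surjectivity is then immediate. Given an arbitrary boundary configuration $(\gamma_0,\gamma_1,\gamma_2) \in C^p(\boundary\Delta^2; M)$, let $\psi_i : T\Delta^1 \to T^*M$ be the zero bundle map covering $\gamma_i$. Each $\psi_i$ is $C^{p,q}$, and since the $\gamma_i$ share their vertices in the pattern prescribed by $\boundary\Delta^2$, the three equations \eqref{eqn:triangle}, being statements about these matching endpoints, hold automatically. Hence $(\psi_0,\psi_1,\psi_2) \in \fB(M)$ maps to $(\gamma_0,\gamma_1,\gamma_2)$. (Unlike in Lemma~\ref{lem:anchorsub}, no connectedness hypothesis is needed, because the base paths are already given.)

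For the submersion property I would build a smooth local section through an arbitrary point $(\psi_0,\psi_1,\psi_2) \in \fB(M)$ with base configuration $b = (\bar\psi_0,\bar\psi_1,\bar\psi_2)$. Fix a Riemannian metric on $M$. For a boundary configuration $c = (\gamma_0,\gamma_1,\gamma_2)$ sufficiently close to $b$, each $\gamma_i(t)$ lies in a geodesically convex neighborhood of $\bar\psi_i(t)$, so there is a unique short geodesic from $\bar\psi_i(t)$ to $\gamma_i(t)$; parallel transporting the covector $\psi_i(t)$ along it yields a $C^{p,q}$ bundle map $\psi_i' : T\Delta^1 \to T^*M$ covering $\gamma_i$. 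Setting $\sigma(c) = (\psi_0',\psi_1',\psi_2')$ gives the candidate section: it satisfies $\sigma(b) = (\psi_0,\psi_1,\psi_2)$ and $\pi_\fB \circ \sigma = \mathrm{id}$ by construction, and $\sigma(c) \in \fB(M)$ because \eqref{eqn:triangle} again involves only the (matching) endpoints of $c$. Since a smooth section with $\pi_\fB\circ\sigma = \mathrm{id}$ forces the differential $d(\pi_\fB)$ at $(\psi_0,\psi_1,\psi_2)$ to be a split surjection, with $d\sigma_b$ as right inverse, this exhibits $\pi_\fB$ as a submersion.

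The main obstacle is the analytic verification that $\sigma$ is genuinely smooth as a map of Banach manifolds in the $C^{p,q}$ topology: one must check that the assignment $c \mapsto (\psi_0',\psi_1',\psi_2')$, built from the exponential map and parallel transport along the family of geodesics joining $\bar\psi_i$ to $\gamma_i$, depends smoothly on $c$ and preserves the $C^{p,q}$ class, and that the neighborhood of $b$ can be chosen uniformly so that all relevant geodesics exist and are unique. Because no compatibility is imposed among the cotangent components at the shared vertices, this reduces to the single-edge estimates already implicit in Lemma~\ref{lem:anchorsub}, applied independently to each of the three edges; the corners of $\boundary\Delta^2$ therefore present no additional difficulty.
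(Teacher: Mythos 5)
Your proposal is correct and takes essentially the same approach as the paper: surjectivity by lifting the edges to zero bundle maps $T\Delta^1 \to T^*M$, and the submersion property by constructing a local section through a given point via the exponential map and parallel transport of the covector data along the resulting geodesics. Your structural observation that the conditions \eqref{eqn:triangle} constrain only the base endpoints (so the lifts and transports can be done edge-by-edge) is exactly what makes the paper's brief argument work, and the smoothness verification you flag is likewise left implicit there.
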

\begin{proof}
Surjectivity is clear, since the edges of any map from $\boundary \Delta^2$ to $M$ can be lifted to zero maps $T\Delta^1 \to T^*M$.

Choose a Riemannian metric on $M$. For any map $f: \boundary \Delta^2 \to M$, we can use the exponential map to identify any sufficiently close maps with lifts $\tilde{f}: \boundary \Delta^2 \to TM$. Given such a lift, we can use parallel transport along the exponential paths to translate any element of $\fB(M)$ whose base map is $f$. This process gives a local section of $\pi_\fB$.
\end{proof}

There is a natural ``$1$-skeleton'' map $\nu: \fC_2(M) \to \fB(M)$, given by $\nu(\varphi) = (d_0 \varphi, d_1 \varphi, d_2 \varphi)$. The map $\nu$ is invariant under the equivalence relation that defines $\LWX_2(M) = \fC_2(M) \modsim$.

Let $\fB_0(M)$ be the connected component of $\fB(M)$ consisting of elements $\beta$ for which $\pi_\fB(\beta)$ is contractible. Clearly, the image of $\nu$ is contained in $\fB_0(M)$. Thus, we see that $\nu$ induces a map $\hat{\nu} : \LWX_2(M) \to \fB_0(M)$.

There is another map $\pi_{\fC}: \fC_2(M) \to S_2(M) := C^p(\Delta^2;M)$, taking $\varphi$ to its base map $\bar{\varphi}$. An equivalence between elements $\varphi, \varphi' \in \fC(M)$ induces a boundary-fixing homotopy between $\bar{\varphi}$ and $\bar{\varphi}'$, so $\pi_{\fC}$ descends to a map from $\LWX_2(M)$ to $S_2(M)\modsim$, where the equivalence relation is boundary-fixing homotopy. We observe that $S_2(M)\modsim$ is a covering of the component of contractible maps in $C^p(\boundary \Delta^2;M)$ and is therefore a Banach manifold\footnote{This statement is a higher-dimensional analogue of the fact that the fundamental groupoid of a manifold $M$ is a cover of $M \times M$, and the proof is similar. We leave the details to the reader.}. In particular, if $\pi_2(M)=0$, then $S_2(M)\modsim = C^p(\boundary \Delta^2; M)$.

\begin{lemma}\label{lem:covering}
The map $(\hat{\nu}, \pi_\fC)$ is a bijection from $\LWX_2(M)$ to the fiber product (over $C^p(\boundary \Delta^2; M)$) of $\fB_0(M)$ with $S_2(M)\modsim$. 
\end{lemma}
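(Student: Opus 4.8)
The plan is to prove the stated map is a bijection directly, since the text has already shown that both $\hat{\nu}$ and the descended $\pi_\fC$ are well defined and that they project to the same element of $C^p(\boundary\Delta^2;M)$, so $(\hat{\nu},\pi_\fC)$ does land in the fiber product. The single geometric fact I would use throughout is that a $C^{p,q}$ bundle map $T\Delta^n\to T^*M$ covering a fixed base map $f$ is the same thing as a $C^{p,q}$ section of $T^*\Delta^n\otimes f^*T^*M$; hence the set of such lifts is a nonempty affine space, and a lift prescribed on a subcomplex of $\boundary\Delta^n$ (and only in the directions tangent to the faces) can always be extended over the interior by convex interpolation together with a partition of unity chosen to preserve the differentiability class. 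I will refer to this as the extension principle.

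For surjectivity I would start with a compatible pair $\big((\psi_0,\psi_1,\psi_2),[g]\big)$, where $(\psi_0,\psi_1,\psi_2)\in\fB_0(M)$, $[g]\in S_2(M)\modsim$, and the common image in $C^p(\boundary\Delta^2;M)$ is $(\bar{\psi}_0,\bar{\psi}_1,\bar{\psi}_2)=\bar{g}|_{\boundary\Delta^2}$. Taking a representative $g$ as base map and applying the extension principle to the edge data $\psi_i$ (which cover the same vertices and are therefore compatible), I obtain a $C^{p,q}$ bundle map $\varphi:T\Delta^2\to T^*M$ over $g$ with $d_i\varphi=\psi_i$ and $\bar{\varphi}=g$. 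Then $(\hat{\nu},\pi_\fC)([\varphi])$ is exactly the chosen pair.

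For injectivity, suppose $[\varphi]$ and $[\varphi']$ have the same image, so $d_i\varphi=d_i\varphi'$ for all $i$ (equal $1$-skeletons) and there is a boundary-fixing homotopy $h:\Delta^2\times[0,1]\to M$ with $h_0=\bar{\varphi}'$ and $h_1=\bar{\varphi}$, constant on $\boundary\Delta^2$. I would construct a $3$-simplex $z\in\fC_3(M)$ realizing the Duskin equivalence, i.e. with $d_2z=\varphi$, $d_3z=\varphi'$, and $d_0z,d_1z\in\im(s_1)$. For the base map I set $\bar{z}=h\circ P$, where $P:\Delta^3\to\Delta^2\times[0,1]$ is given by $P(t_0,t_1,t_2,t_3)=\big((t_0,t_1,t_2+t_3),\,t_3/(t_2+t_3)\big)$; the apparent singularity along the edge $[0,1]$ is harmless because $h$ is constant in the homotopy parameter there. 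A direct check on the four faces gives $\bar{z}|_{[0,1,3]}=\bar{\varphi}$ and $\bar{z}|_{[0,1,2]}=\bar{\varphi}'$, while the faces $[0,2,3]$ and $[1,2,3]$ land in $\boundary\Delta^2$ and are independent of the homotopy parameter, hence collapse the edge $[2,3]$, which is precisely the degeneracy pattern of $\im(s_1)$. To lift $\bar{z}$ I prescribe $\varphi$ and $\varphi'$ on the faces $[0,1,3]$ and $[0,1,2]$ and the appropriate $s_1$ of an edge on each degenerate face; a routine check shows these four face maps agree on all common edges, the crucial point being that $d_i\varphi=d_i\varphi'$ forces the two copies of each boundary edge to carry identical cotangent data. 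Applying the extension principle over $\bar{z}$ produces the desired $z$, whence $\varphi\sim\varphi'$ and $[\varphi]=[\varphi']$.

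I expect the main obstacle to lie in the lifting step in its quantitative form: verifying that the convex-interpolation-plus-partition-of-unity construction genuinely yields a map of class $C^{p,q}$ rather than merely $C^q$, and that the two degenerate face conditions can be met simultaneously with the two prescribed nondegenerate faces. The prism map $P$ is elementary, but its well-definedness and $C^p$ regularity rest entirely on the homotopy being boundary-fixing, so that is the point I would treat most carefully.
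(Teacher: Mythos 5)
Your overall architecture coincides with the paper's: surjectivity by assembling the prescribed edge data over a chosen representative base map (only the face-tangent components are prescribed, the transverse components being supplied by interpolation, which the paper carries out via the explicit affine formulas $\beta_0,\beta_1,\beta_2$) and then extending inward; injectivity by producing a Duskin $3$-simplex $z$ with $d_2 z = \varphi$, $d_3 z = \varphi'$, and $d_0 z, d_1 z \in \im(s_1)$. Your identification of the degenerate faces as $s_1$ of the edges $d_0\varphi$ and $d_1\varphi$, and your use of $d_i\varphi = d_i\varphi'$ to match the cotangent data along shared edges, are exactly right and mirror the paper's sketch.

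The gap is in the one place you flagged, and it is genuine: the claim that the singularity of $P$ along the edge $[0,1]$ is ``harmless'' fails at the required differentiability. Write $u = t_2+t_3$ and $s = t_3/u$. Boundary-fixing gives $\partial_s^k h(x,\cdot)=0$ for $x \in \boundary\Delta^2$, hence $|\partial_s h| = O(u)$ near the edge, while the derivatives of the collapse coordinate blow up: $|Ds|\sim u^{-1}$, $|D^2 s|\sim u^{-2}$. The first derivatives of $h\circ P$ are therefore bounded, but their boundary limits depend on the approach direction through $\lim t_3/u$ (they involve $\partial_\nu\partial_s h(x_0,s)$ evaluated at $s=\lim t_3/u$), so $h\circ P$ is in general Lipschitz but not $C^1$, and second derivatives acquire terms of size $u^{-1}$. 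Since elements of $\fC_3(M)$ must have $C^p$ base maps (and the paper takes $p=2$ in Section 4), your filler as written need not lie in $\fC_3(M)$. The repair is standard and is in effect what the paper does: use $h\circ P$ (equivalently, the null-homotopy of the assembled boundary sphere that the boundary-fixing class provides) only to conclude that the $C^p$ boundary data on $\boundary\Delta^3$ --- which is where your careful edge-matching lives --- admits a continuous extension, then smooth it rel $\boundary\Delta^3$ by relative Whitney approximation to obtain a $C^p$ base filler with the same boundary values; the cotangent data then extends fiberwise without obstruction, exactly as in your extension principle. This is the content of the paper's remark that the extension $\zeta$ ``can be chosen to have the same order of differentiability as the boundary.''
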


\begin{proof}
Throughout this proof, we will assume that a choice of Riemannian metric on $M$ has been fixed, and we will implicitly use parallel transport to identify cotangent spaces at different points along paths in $M$.

We will first show that $(\hat{\nu},\pi_\fC)$ is surjective.  Let $(\psi_0,\psi_1,\psi_2)$ be in $\fB_0$, and let $f$ be a compatible map in $C^p(\Delta^2;M)$. Using the standard trivializations of $\Delta^1$ and $\Delta^2$, we can identify each $\psi_i$ with a path in $T^*M$, and we can identify $\fC_2(M)$ with the space of $C^{p,q}$ maps from $\Delta^2$ to $T^*_2M := T^*M \oplus T^*M$.

For $i=0,1,2$, let $\beta_i$ be the path in $T^*_2 M$ with the same base path as $\psi_i$, given by
\begin{align*}
\beta_0(t) &= \left(\psi_0(t), (1-t)(\psi_1(0) - \psi_0(0)) + t\psi_2(0)\right),\\
\beta_1(t) &= \left((1-t)\psi_0(0) + t(\psi_1(t) - \psi_2(1)), (1-t)(\psi_1(t) - \psi_0(0)) + t\psi_2(1)\right),\\
\beta_2(t) &= \left((1-t)\psi_0(1) + t(\psi_1(1) - \psi_2(1)), \psi_2(t)\right).
\end{align*}
These paths agree at the endpoints, in that $\beta_2(0) = \beta_0(1)$, $\beta_0(0) = \beta_1(0)$, and $\beta_1(1) = \beta_2(1)$, so they form a well-defined map $\beta: \boundary \Delta^2 \to T^*_2 M$ that can be extended to a $C^q$ map $\varphi: \Delta^2 \to T^*_2 M$ for which the base map is $f$. By construction, we have that $\hat{\nu}(\varphi) = (\psi_0,\psi_1,\psi_2)$ and $\pi_\fC(\varphi) = f$. 

Next, we will show that $(\hat{\nu},\pi_\fC)$ is one-to-one. Suppose that $\varphi, \varphi'$ are elements of $\fC_2(M)$ for which $\nu(\varphi) = \nu(\varphi')$ and $\pi_\fC(\varphi) \sim \pi_\fC(\varphi')$. By a process similar to the proof of surjectivity, one can construct a map from $\boundary \Delta^3$ to $T^*_3 M$ which, if it could be extended to a map $\zeta: \Delta^3 \to T^*_3 M$, would satisfy the conditions of an equivalence between $\varphi$ and $\varphi'$. The assumption that $\pi_\fC(\varphi)$ and $\pi_\fC(\varphi')$ are in the same boundary-fixing homotopy class guarantees that such an extension $\zeta$ does exist (and can be chosen to have the same order of differentiability as the boundary), proving that $\varphi$ and $\varphi'$ represent the same element of $\LWX_2(M)$. 
\end{proof}

Theorem \ref{thm:truncation} follows  directly from Lemmas \ref{lem:fbbanach}--\ref{lem:covering}.

\begin{remark}
If $\pi_2(M) = 0$, then Lemma \ref{lem:covering} implies that $\LWX_2(M)$ is naturally diffeomorphic to $\fB_0(M)$.
If $\pi_1(M) = 0$, then $\fB_0(M) = \fB(M)$. Thus, if $M$ is $2$-connected, then $\hat{\nu}$ is a diffeomorphism from $\LWX_2(M)$ to $\fB(M)$.  This fact provides a simple description (in the $2$-connected case) of elements of $\LWX_2(M)$ as triangles of paths in $\fC_1(M)$. For general $M$, an appropriate modification is as follows: an element of $\LWX_2(M)$ corresponds to a quadruplet $([f], \psi_0, \psi_1, \psi_2)$, where
\begin{itemize}
\item $[f]$ is a class of $C^p$ maps from $\Delta^2$ to $M$, modulo boundary-fixing homotopy, and
\item each $\psi_i$ is a $C^q$ lift of the $i$th edge of $f$ to $T^*M$.
\end{itemize}
\end{remark}

\section{Symplectic structures}\label{sec:symplectic}

In this section, we describe how the canonical symplectic form on $T^*M$ induces symplectic structures on $\LWX_1(M)$ and $\LWX_2(M)$.

\subsection{Multiplicative forms and truncation}
Let $X_\bullet$ be a Kan simplicial manifold, and let $\alpha$ be a differential form on $X_n$ for some $n$. Recall that the simplicial coboundary of $\alpha$ is defined as
\[ \delta \alpha := \sum_{i=0}^{n+1} (-1)^i d_i^* \alpha.\]
We say that $\alpha$ is \emph{multiplicative} if $\delta \alpha = 0$.

\begin{prop}\label{prop:basic}
If $\alpha$ is multiplicative, then $\alpha$ is basic with respect to the quotient map $X_n \to (\tau_{\leq n} X)_n$. 
\end{prop}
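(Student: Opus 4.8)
The plan is to show that $\alpha$ descends along the quotient map $q: X_n \to (\tau_{\leq n}X)_n = X_n \modsim$. Since this map is a surjective submersion with connected fibres — the fibres are the equivalence classes, and each is connected because every equivalence is witnessed by an element of $X_{n+1}$ and can therefore be joined to the trivial witness $s_n x$ — it suffices to verify the two standard conditions for a form to be basic: that $\alpha$ is horizontal, i.e. $\iota_V \alpha = 0$ for every vector $V$ tangent to a fibre of $q$, and that $\alpha$ is invariant along the fibres, i.e. $\iota_V d\alpha = 0$ for such $V$. Because $d$ commutes with the simplicial coboundary, $\delta(d\alpha) = d(\delta\alpha) = 0$, so $d\alpha$ is again multiplicative; thus the invariance condition for $\alpha$ is exactly the horizontality condition for $d\alpha$, and it is enough to prove the single statement: every multiplicative form is horizontal.

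To prove horizontality I first describe the vertical vectors. Fix $x \in X_n$ and set $z_0 = s_n x \in X_{n+1}$; the simplicial identities give $d_n z_0 = d_{n+1} z_0 = x$ and $d_i z_0 = s_{n-1}(d_i x) \in \im(s_{n-1})$ for $i < n$, so $z_0$ is the trivial witness for $x \sim x$. A vertical vector $V$ at $x$ is then realized as $V = d_{(n+1)*}\dot z$, where $\dot z \in T_{z_0}X_{n+1}$ is tangent to the space of witnesses with fixed $d_n$-face, i.e. $d_{n*}\dot z = 0$ and $d_{i*}\dot z \in \im(s_{(n-1)*})$ for $i < n$. Next I evaluate the identity $\delta\alpha = \sum_{i=0}^{n+1}(-1)^i d_i^*\alpha = 0$ at $z_0$, inserting $\dot z$ in the first argument and companion vectors in the remaining ones. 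Since $d_{n+1}z_0 = x$, the $i = n+1$ term reproduces a value of $\iota_V \alpha$, while the $i = n$ term vanishes because $d_{n*}\dot z = 0$.

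The main obstacle is controlling the terms $0 \le i < n$, which are evaluated at the degenerate faces $d_i z_0 = s_{n-1}(d_i x)$. I plan to choose the companion vectors tangent to $\im(s_n)$, of the form $s_{n*} a_j$; the identities $d_i s_n = s_{n-1} d_i$ for $i < n$ and $d_n s_n = d_{n+1} s_n = \mathrm{id}$ then ensure that in each term with $i < n$ every argument lies in $\im(s_{(n-1)*})$, so that the term equals a value of the pulled-back form $s_{n-1}^*\alpha$, while the $i = n+1$ term becomes $\iota_V\alpha$ tested against the arbitrary vectors $a_j = d_{(n+1)*}(s_{n*}a_j)$. What remains, and what I expect to be the crux, is to show that the alternating sum of these degenerate contributions vanishes. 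The lower faces are not independent: the simplicial identities force the family $(d_i z)_{i<n}$ to obey the compatibility relations of a horn in $X_{n-1}$, and I anticipate that the $s_{n-1}^*\alpha$-terms cancel as a consequence, if necessary after replacing $\alpha$ by a cohomologous multiplicative representative annihilating degeneracies (passage to the normalized complex). Once this vanishing is established, the relation $\delta\alpha = 0$ at $z_0$ collapses to $\iota_V\alpha = 0$ tested against arbitrary $a_j$, yielding horizontality; applying the same conclusion to $d\alpha$ then gives invariance, and hence that $\alpha$ is basic.
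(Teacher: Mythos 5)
Your overall skeleton is the same as the paper's: reduce basicness to horizontality for both $\alpha$ and $d\alpha$ (using $\delta d = d\delta$), then evaluate $\delta\alpha = 0$ against a lift $\dot z$ with companion vectors $(s_n)_* a_j$, so that the $i = n+1$ term produces $\iota_V\alpha$. But the step you yourself flag as the crux is a genuine gap, and the escape route you sketch does not close it. First, passing to a cohomologous normalized representative cannot work on principle: being basic is a property of the specific form $\alpha$, not of its class in the $\delta$-complex, and knowing that $\alpha + \delta\beta$ is basic tells you nothing about $\alpha$ unless you already know $\delta\beta$ is basic --- which is an instance of the very proposition being proved. Second, the hoped-for cancellation of the degenerate terms should not be expected, because your characterization of the vertical space is too weak and admits vectors that are not tangent to the fibers at all: if $(d_i)_*\dot z = (s_{n-1})_* b_i$ with $b_i \neq 0$ for some $i < n$, then $(d_i)_* V = (d_n)_* (d_i)_* \dot z = b_i \neq 0$, whereas every face of a genuine fiber-tangent vector must vanish, since all the faces of $y(t)$ are pinned to those of $x$ along the fiber ($d_i y = d_i x$ for all $i$, by the simplicial identities). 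For this strictly larger class of vectors, $\iota_V \alpha = 0$ is simply not true in general.

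The fix --- which is exactly the paper's proof --- is to sharpen the description of the vertical vectors, after which there is nothing left to cancel. Along a curve $y(t) \sim x$ in the fiber, a witness $z(t)$ satisfies not only $d_n z(t) = x$ but also $d_i z(t) = s_{n-1} d_i x$ for $0 \leq i < n$, a \emph{constant}: writing $d_i z = s_{n-1} w$, the identities $d_i d_n = d_{n-1} d_i$ and $d_{n-1} s_{n-1} = \mathrm{id}$ give $w = d_i x$, and likewise $d_i y(t) = d_i x$. Differentiating, a vector $v \in T_x X_n$ is tangent to the fiber if and only if there exists $\tilde v \in TX_{n+1}$ with $(d_{n+1})_* \tilde v = v$ and $(d_i)_* \tilde v = 0$ for \emph{all} $0 \leq i \leq n$ --- not merely $(d_i)_*\tilde v \in \im\left((s_{n-1})_*\right)$ for $i < n$. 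With this characterization, every term of index $i \leq n$ in $\delta\alpha\left(\tilde v, (s_n)_* a_1, \dots, (s_n)_* a_{k-1}\right) = 0$ vanishes on the nose because its first slot is zero, and the identity collapses immediately to $\iota_v \alpha = 0$; applying the same to $d\alpha$ gives invariance. Your argument becomes complete (and coincides with the paper's) once you replace your vertical-space description by this one and delete the unresolved cancellation step; your connectedness remark about joining an arbitrary witness to $s_n x$ would also need justification, but it is not needed once horizontality of $\alpha$ and $d\alpha$ is established in this form.
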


\begin{proof}
Recall that the quotient is defined by the equivalence relation where $x \sim y$ if and only if there exists $z \in X_{n+1}$ such that $d_n z = x$, $d_{n+1} z = y$, and $d_i z \in \im(s_{n-1})$ for $0 \leq i < n$.  In this case, it follows that $d_i z = s_{n-1} d_i x = s_{n-1} d_i y$ for $0 \leq i < n$. 

From the definition of the equivalence relation, we can see that a vector $v \in TX_n$ is tangent to a fiber of the quotient map if and only if there exists a vector $\tilde{v} \in TX_{n+1}$ such that $(d_{n+1})_* \tilde{v} = v$ and $(d_i)_* \tilde{v} = 0$ for $0 \leq i \leq n$. If this is the case, then, for any $\alpha \in \Omega^k (X_n)$,
\[ \alpha(v, \cdot, \dots, \cdot) = \pm (\delta \alpha)(\tilde{v}, \cdot, \dots, \cdot).\]
Therefore, if $\alpha$ is multiplicative, then any vector tangent to a fiber of the quotient map is in $\ker \alpha$.

If $\alpha$ is multiplicative, then $d\alpha$ is also multiplicative, and any vector tangent to a fiber of the quotient map is also in $\ker d\alpha$. Since both $\alpha$ and $d\alpha$ annihilate vectors tangent to the fibers, we conclude that $\alpha$ is basic.
\end{proof}

\subsection{Lifting differential forms}
\label{subsec:mapping}
Recall that, for each $n$, the space $\fC(M)$ consists of $C^{p,q}$ bundle maps from $T\Delta^n$ to $T^*M$. For $\varphi \in \fC_n(M)$, a tangent vector at $\varphi$ is given by a $C^{p,q}$ lift $X: T\Delta^n \to TT^*M$ that is linear over $TM$:
\begin{equation}\label{eqn:vectordiagram}
\xymatrix@C=1em{
& & & TT^*M = T^*TM \ar[dd] \ar[dr] & \\
T\Delta^n \ar[dd] \ar^(.3)\varphi[rrrr] \ar^X[urrr] & & & & T^*M \ar[dd] \\
& & & TM \ar[dr] & \\
\Delta^n \ar^{X_0}[urrr] \ar^f[rrrr] & & & & M}
\end{equation}

For each pair of tangent vectors $X,Y \in T_\varphi \fC_n(M)$, we can use the canonical symplectic form $\canomega$ on $T^*M$ to obtain a function $\eta^n_{X,Y}$ on $T\Delta^n$, given by
\begin{equation*}
\eta^n_{X,Y}(v) = \canomega(X(v),Y(v))
\end{equation*}
for $v \in T\Delta^n$.

\begin{prop}\label{prop:etalin}
The function $\eta^n_{X,Y}$ is linear and can therefore be identified with a $1$-form on $\Delta^n$.
\end{prop}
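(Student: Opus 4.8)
The plan is to reduce the statement to a local computation in Darboux coordinates, where the essential mechanism becomes transparent: the canonical symplectic form $\canomega$ pairs the ``base'' (horizontal) directions of $T^*M$ against the ``fiber'' (vertical) directions, and along each fiber $T_t\Delta^n$ of $T\Delta^n$ the base component of $X(v)$ is \emph{constant} while the fiber component is \emph{linear} in $v$. A product of a constant and a linear function is linear, and this is exactly what makes $\eta^n_{X,Y}$ linear on each $T_t\Delta^n$.

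To set this up, I would fix $t \in \Delta^n$, use the standard trivialization $T\Delta^n = \Delta^n \times \R^n$ to write $v = (t,u)$ with $u \in \R^n$, and choose coordinates $q^i$ on $M$ near $\bar\varphi(t) =: f(t)$, inducing Darboux coordinates $(q^i,p_i)$ on $T^*M$ with $\canomega = \sum_i dp_i \wedge dq^i$. Because $\varphi$ is a bundle map, linear on the fibers of $T\Delta^n$, it takes the form $\varphi(t,u) = \big(f(t),\, \sum_a u^a \xi_a(t)\big)$, where $\xi_a(t) = \varphi(t,e_a)$. A tangent vector $X \in T_\varphi \fC_n(M)$ may be realized as a variation $X(t,u) = \frac{d}{d\epsilon}\big|_{0} \varphi_\epsilon(t,u)$ through bundle maps with $\varphi_\epsilon(t,u) = \big(f_\epsilon(t), \sum_a u^a \xi_{a,\epsilon}(t)\big)$. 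Equivalently, this is the content of diagram \eqref{eqn:vectordiagram}: the requirement that $X$ cover $X_0$ under $TT^*M \to TM$ says the base ($\dot q$) component of $X(t,u)$ depends only on $t$, and linearity of $X$ over $TM$ (using $TT^*M = T^*TM$) says the fiber ($\dot p$) component is linear in $u$.

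Writing $X(t,u) = \sum_i \big(a_X^i(t)\,\partial_{q^i} + b_{X,i}(t,u)\,\partial_{p_i}\big)$, the two facts above read: $a_X^i$ is independent of $u$, while $b_{X,i}(t,u) = \sum_a u^a \beta^X_{a,i}(t)$ is linear in $u$ (and likewise for $Y$). Substituting into $\canomega$ then gives
\[
\eta^n_{X,Y}(t,u) = \sum_i \big(b_{X,i}(t,u)\,a_Y^i(t) - b_{Y,i}(t,u)\,a_X^i(t)\big),
\]
which, being a sum of products of a linear function of $u$ with a constant, is linear in $u$. Hence $\eta^n_{X,Y}$ restricts to a linear functional on each fiber $T_t\Delta^n$ and so is identified with a $1$-form $\sum_a c_a(t)\,dt^a$ on $\Delta^n$.

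The one point requiring care is that the splitting of $T_\theta T^*M$ into its $\partial_q$ and $\partial_p$ parts is coordinate dependent, so the individual claims ``$a_X$ constant'' and ``$b_X$ linear'' are not themselves intrinsic; what is intrinsic is the $TM$-projection $T\pi_M(X(v)) = X_0(t)$ and the final conclusion that $\eta^n_{X,Y}$ is fiberwise linear. I would therefore stress that the computation is carried out in an \emph{arbitrary} adapted Darboux chart, and that, since $\eta^n_{X,Y}$ is defined invariantly through $\canomega$, its fiberwise linearity is a chart-independent conclusion. This is the only genuine obstacle; the remainder is simply the bookkeeping of the two structural properties of $X$ extracted from \eqref{eqn:vectordiagram}.
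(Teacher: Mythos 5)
Your proof is correct and takes essentially the same approach as the paper: the paper's one-line proof invokes exactly the linearity of $\canomega$ with respect to the bundle structure $T^*M \to M$, which your Darboux-coordinate computation (base component of $X(v)$ constant in $v$, fiber component linear in $v$, paired by the constant-coefficient form $\canomega$) simply makes explicit. Your closing remark on chart-dependence is also handled correctly, since fiberwise linearity of the invariantly defined $\eta^n_{X,Y}$ on each $T_t\Delta^n$ may be verified in any single adapted chart.
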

\begin{proof}
The result is a direct consequence of the linearity property of $\canomega$ with respect to the bundle structure of $T^*M \to M$.
\end{proof}

For $n=1$, the operation $(X,Y) \mapsto \int_{\Delta^1} \eta^1_{X,Y}$ is bilinear and skew-symmetric, and so it determines a $2$-form $\omega_1 \in \Omega^2(\fC_1(M))$. For $n=2$, we can also define a $2$-form $\omega_2 \in \Omega^2(\fC_2(M))$ by the formula
\begin{equation}\label{eqn:omega2}
\omega_2(X,Y) = \int_{\Delta^2} d \eta^2_{X,Y} = \int_{\boundary \Delta^2} \eta^2_{X,Y}.
\end{equation}

\begin{prop} \label{prop:coboundary}
$\omega_2$ is the simplicial coboundary of $\omega_1$.
\end{prop}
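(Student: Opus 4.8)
The plan is to verify the pointwise identity $\omega_2 = \delta\omega_1 = \sum_{i=0}^{2}(-1)^i d_i^*\omega_1$ by unwinding both sides and then invoking Stokes' theorem on $\Delta^2$. The starting observation is that the face map $d_i : \fC_2(M) \to \fC_1(M)$ is precomposition by the tangent $Te_i$ of the $i$th coface inclusion $e_i : \Delta^1 \to \Delta^2$; that is, $d_i\varphi = \varphi \circ Te_i$. Differentiating, the pushforward of a tangent vector $X \in T_\varphi\fC_2(M)$ (viewed as a lift $X : T\Delta^2 \to TT^*M$) is $(d_i)_* X = X \circ Te_i$, which is again just the restriction of $X$ to the $i$th edge.

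First I would record how the functions $\eta^n_{X,Y}$ behave under these restrictions. Since $\eta^n_{X,Y}$ is defined pointwise on $T\Delta^n$ by $\eta^n_{X,Y}(v) = \canomega(X(v),Y(v))$, for any $w \in T\Delta^1$ we obtain
\[ \eta^1_{(d_i)_*X,(d_i)_*Y}(w) = \canomega\bigl(X(Te_i\,w), Y(Te_i\,w)\bigr) = \eta^2_{X,Y}(Te_i\,w) = (e_i^*\eta^2_{X,Y})(w). \]
Under the identification of these functions with $1$-forms (Proposition \ref{prop:etalin}), this says precisely that $\eta^1_{(d_i)_*X,(d_i)_*Y} = e_i^*\eta^2_{X,Y}$, the pullback of the $1$-form $\eta^2_{X,Y}$ along the edge inclusion. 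Consequently,
\[ (d_i^*\omega_1)(X,Y) = \int_{\Delta^1}\eta^1_{(d_i)_*X,(d_i)_*Y} = \int_{\Delta^1} e_i^*\eta^2_{X,Y}. \]

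Next I would assemble the alternating sum and apply Stokes. The standard cosimplicial conventions give the chain-level boundary $\boundary\Delta^2 = \sum_{i=0}^{2}(-1)^i\, e_i(\Delta^1)$, so summing the previous display with the signs $(-1)^i$ yields
\[ (\delta\omega_1)(X,Y) = \sum_{i=0}^{2}(-1)^i\int_{\Delta^1} e_i^*\eta^2_{X,Y} = \int_{\boundary\Delta^2}\eta^2_{X,Y} = \int_{\Delta^2} d\eta^2_{X,Y} = \omega_2(X,Y), \]
where the penultimate equality is Stokes' theorem and the final one is the definition \eqref{eqn:omega2}. This gives the claim.

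The one place that demands care is the orientation and sign bookkeeping: I must confirm that the algebraic signs $(-1)^i$ attached to the face maps $d_i$ agree with the orientations with which the edges $e_i(\Delta^1)$ appear in the boundary of the oriented standard simplex $\Delta^2$ under Stokes' theorem. This is the standard compatibility between the cosimplicial coface structure on $\{\Delta^\bullet\}$ and the geometric boundary orientation, but since the entire statement is a sign identity, aligning these conventions — rather than any analytic subtlety — is the crux of the proof.
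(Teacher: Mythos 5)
Your proposal is correct and follows essentially the same route as the paper's proof: you identify the pushforward under the face map $d_i$ with precomposition by the tangent of the coface map (the paper's $\sigma_i$, your $e_i$), deduce $\eta^1_{(d_i)_*X,(d_i)_*Y} = \sigma_i^*\eta^2_{X,Y}$ exactly as in \eqref{eqn:eta1eta2}--\eqref{eqn:faceomega}, and conclude via the alternating sum and Stokes' theorem built into the definition \eqref{eqn:omega2}. Your closing remark on orientation conventions is a reasonable point of care, but it matches the standard compatibility the paper implicitly uses, so nothing further is needed.
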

\begin{proof}
For $i=0,1,2$, let $\sigma_i : \Delta^1 \to \Delta^2$ be the $i$th coface map (which is essentially dual to the face map $d_i$). For any $X,Y \in T_\varphi \fC_2(M)$ and $v \in T\Delta^1$, we have that
\begin{equation}\label{eqn:eta1eta2}
\begin{split}
\eta^1_{Td_i(X),Td_i(Y)}(v) &= \canomega(Td_i(X)(v), Td_i(Y)(v)) \\
&= \canomega(X(T\sigma_i(v)),Y(T\sigma_i(v))) \\
&= \eta^2_{X,Y}(T\sigma_i(v)).
\end{split}
\end{equation}
Using \eqref{eqn:eta1eta2}, we see that
\begin{equation}\label{eqn:faceomega}
\begin{split}
(d_i^* \omega_1)(X,Y) &= \omega_1(Td_i(X),Td_i(Y)) \\
&= \int_{\Delta^1} \eta^1_{Td_i(X),Td_i(Y)} \\
&= \int_{\Delta^1} \sigma_i^* \eta^2_{X,Y}.
\end{split}
\end{equation}
The result then follows from \eqref{eqn:omega2} and \eqref{eqn:faceomega}.
\end{proof}

\begin{prop}\label{prop:exact}
The $2$-forms $\omega_1 \in \Omega^2(\fC_1(M))$ and $\omega_2 \in \Omega^2(\fC_2(M))$ are exact.
\end{prop}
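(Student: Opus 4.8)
The plan is to exhibit explicit primitives $\lambda_1 \in \Omega^1(\fC_1(M))$ and $\lambda_2 \in \Omega^1(\fC_2(M))$ satisfying $d\lambda_i = \omega_i$, mirroring the way $\omega_1$ and $\omega_2$ were constructed out of the canonical symplectic form $\canomega$ on $T^*M$. The key observation is that $\canomega = d\canlam$, where $\canlam$ is the canonical (Liouville) $1$-form on $T^*M$. For $n=1,2$ and a tangent vector $X \in T_\varphi \fC_n(M)$, presented as a lift $X: T\Delta^n \to TT^*M$ as in diagram \eqref{eqn:vectordiagram}, I would define a function $\theta^n_X$ on $T\Delta^n$ by
\[
\theta^n_X(v) = \canlam\big(X(v)\big),
\]
and argue, exactly as in Proposition \ref{prop:etalin}, that $\theta^n_X$ is linear in $v$ and hence defines a $1$-form on $\Delta^n$. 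Because $\canlam$ is itself linear over the bundle structure $T^*M \to M$, the same reasoning used for $\eta^n_{X,Y}$ applies verbatim.

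Next I would set
\[
\lambda_1(X) = \int_{\Delta^1} \theta^1_X, \qquad \lambda_2(X) = \int_{\Delta^2} d\theta^2_X = \int_{\boundary\Delta^2} \theta^2_X,
\]
in direct analogy with the definitions of $\omega_1$ and $\omega_2$. These are linear in $X$ and so genuinely define $1$-forms on $\fC_1(M)$ and $\fC_2(M)$. The heart of the proof is then to verify that $d\lambda_i = \omega_i$. The essential point is that the exterior derivative on the infinite-dimensional manifold $\fC_n(M)$ should commute with integration over the (finite-dimensional, compact) simplex $\Delta^n$, reducing the computation of $d\lambda_i$ to the pointwise relation $d\big(\canlam(X)\big) = \canomega(X,Y)$ on $T^*M$ together with the standard Cartan-calculus identity relating $d$ of a $1$-form evaluated on two vector fields to Lie derivatives and the bracket. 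Since $\eta^n_{X,Y}(v) = \canomega(X(v),Y(v)) = (d\canlam)(X(v),Y(v))$, the fiberwise primitive $\theta^n$ integrates, after interchanging $d$ and $\int$, to the fiberwise primitive of $\eta^n$, yielding $d\lambda_n = \omega_n$.

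The step I expect to be the main obstacle is justifying the interchange of the exterior derivative with integration over $\Delta^n$, and more precisely controlling the boundary contributions that the interchange produces. On a manifold of mapping type, the Cartan formula for $d\lambda(X,Y)$ involves the Lie bracket $[X,Y]$ of vector fields on $\fC_n(M)$, and one must check that the bracket terms assemble correctly against the fiberwise Lie derivatives coming from $d(\canlam(X(v)))$; this is a routine but careful piece of bookkeeping in the $C^{p,q}$ setting, where one relies on the fact that $T^*M$-valued maps of the prescribed differentiability class still allow differentiation under the integral sign. For $\lambda_2$, Stokes' theorem converts the bulk integral over $\Delta^2$ into the boundary integral, and the differentiability assumptions guarantee the boundary values are well-defined; I would verify that no anomalous boundary term survives, so that $d\lambda_2 = \omega_2$ holds as stated. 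Once $d\lambda_i = \omega_i$ is established for $i=1,2$, exactness is immediate and the proposition follows.
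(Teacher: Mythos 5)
Your proposal coincides with the paper's proof essentially verbatim: the paper uses exactly the same primitives $\theta^n_X(v)=\canlam(X(v))$, $\lambda_1(X)=\int_{\Delta^1}\theta^1_X$, and $\lambda_2(X)=\int_{\boundary\Delta^2}\theta^2_X$ (with $\lambda_2=\delta\lambda_1$), differing only in sign convention, since the paper takes $\canomega=-d\canlam$ and concludes $\omega_i=-d\lambda_i$. The step you defer as the main obstacle is settled there by a short computation in canonical local coordinates, where $\lambda_1|_{(f^i,\xi_i)}(v^i,\chi_i)=\int_{\Delta^1}v^i\xi_i$ and taking directional derivatives gives $d\lambda_1=-\omega_1$ directly, with no anomalous boundary term because the integrand involves no derivatives along the simplex (in contrast to the twisted case, where $d\phi^H_1=\phi^{dH}_1+\delta H$ does pick up a boundary contribution).
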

\begin{proof}
Let $\canlam$ denote the tautological $1$-form on $T^*M$, satisfying the property $\canomega = -d\canlam$. We can use $\canlam$ to induce forms on the mapping spaces in a manner similar to the construction of $\omega_1$ and $\omega_2$. Specifically, for $X \in T_\varphi \fC_n(M)$, let $\theta^n_X$ be the function on $T\Delta^n$ given by
\begin{equation*}
\theta^n_X(v) = \canlam(X(v)).
\end{equation*}
Because of the linearity property of $\canlam$, we have that $\theta^n_X$ is a linear function and can therefore be identified with a $1$-form on $\Delta^n$.

Then, let $\lambda_1 \in \Omega^1(\fC_1(M))$  and $\lambda_2 \in \Omega^1(\fC_2(M))$ be defined by
\begin{align*}
\lambda_1(X) &= \int_{\Delta^1} \theta^1_X, & \lambda_2(X) &= \int_{\Delta^2} d\theta^2_X = \int_{\boundary \Delta^2} \theta^2_X.
\end{align*}
The proof of Proposition \ref{prop:coboundary}, with appropriate modification, can be used to show that $\lambda_2 = \delta \lambda_1$.

We claim that $\omega_1 = -d\lambda_1$ (and, since $d$ commutes with $\delta$, therefore $\omega_2 = -d\lambda_2)$. We can check it locally in $M$, as follows.

Let $(x^i, p_i)$ be canonical coordinates on a neighborhood in $T^*M$. Any $C^{p,q}$ bundle map $\varphi: T\Delta^1 \to T^*M$ is locally described by the pullbacks $f^i := \varphi^*(x^i)$ and $\xi_i := \varphi^*(p_i)$, where $f^i \in C^p(\Delta^1)$ and $\xi_i \in C^q_{\mathrm{linear}}(T\Delta^1)$ can be identified with $1$-forms on $\Delta^1$. A tangent vector $X \in T_\varphi \fC_1(M)$ is locally given by $(v^i, \chi_i)$, where the $v^i$ and $\chi_i$ are functions and $1$-forms, respectively, on $\Delta^1$.

We can locally describe $\lambda_1$ by the formula
\begin{equation}\label{eqn:lambdalocal}
\lambda_1|_{(f^i,\xi_i)}(v^i, \chi_i) = \int_{\Delta^1} v^i \xi_i.
\end{equation}
The directional derivatives of $\lambda_1$ are given by
\begin{equation*}
D_{(v^i, \chi_i)} \lambda_1|_{(f^i,\xi_i)}(v'^i , \chi_i') = \int_{\Delta^1} v'^i  \chi_i,
\end{equation*}
from which we obtain the result
\begin{equation*} 
d\lambda_1((v^i, \chi_i),(v'^i , \chi_i')) = \int_{\Delta^1} v'^i  \chi_i - v^i \chi_i' = -\omega_1((v^i, \chi_i),(v'^i , \chi_i')).
\end{equation*}
\end{proof}

\subsection{A symplectic $2$-groupoid}

The notion of \emph{symplectic $2$-groupoid} was defined in \cite{ls:integration, mt:double}. We somewhat imprecisely state the definition as follows:

\begin{definition}\label{def:symp2}
A \emph{symplectic $2$-groupoid} is a Lie $2$-groupoid $\{X_\bullet\}$ that is equipped with a closed, multiplicative $2$-form $\omega \in \Omega^2(X_2)$ satisfying a nondegeneracy condition.
\end{definition}
In Definition \ref{def:symp2}, we have intentionally left the content of the nondegeneracy condition ambiguous. Li-Bland and \v{S}evera \cite{ls:integration} required the $2$-form to be genuinely nondegenerate, so that $(X_2, \omega)$ is a symplectic manifold. In \cite{mt:double}, a weaker condition was stated, where the kernel of $\omega$ is required to be controlled in a certain way by the simplicial structure. For our present purposes, it will suffice to use the genuine nondegeneracy condition; however, since we are dealing with Banach manifolds, we only require weak nondegeneracy.

Since $\omega_2 \in \Omega^2(\fC_2(M))$ is multiplicative, it descends (by Proposition \ref{prop:basic}) to $\LWX_2(M) = \fC_2(M)\modsim$.

\begin{thm}\label{thm:symp2}
$\LWX(M)$, equipped with the $2$-form $\omega_2$, is a symplectic $2$-groupoid.
\end{thm}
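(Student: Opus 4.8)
The definition of symplectic $2$-groupoid (Definition \ref{def:symp2}) has four ingredients to verify for $(\LWX(M),\omega_2)$: that $\LWX(M)$ is a Lie $2$-groupoid, and that $\omega_2$ is closed, multiplicative, and weakly nondegenerate. The first three are essentially already in hand. The Lie $2$-groupoid property is Theorem \ref{thm:truncation}. Closedness is immediate from Proposition \ref{prop:exact}, which exhibits $\omega_2$ as exact on $\fC_2(M)$; since the quotient map $q_2:\fC_2(M)\to\LWX_2(M)$ is a submersion and $\omega_2$ descends by Proposition \ref{prop:basic}, the descended form is closed as well. Multiplicativity follows from Proposition \ref{prop:coboundary}: on the truncated simplicial manifold the relation $\omega_2=\delta\omega_1$ persists, since $\LWX_1(M)=\fC_1(M)$ and the face maps are the induced ones, so $\delta\omega_2=\delta^2\omega_1=0$. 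Thus the real content of the theorem is the nondegeneracy of $\omega_2$ on $\LWX_2(M)$.

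For nondegeneracy I would pass to the concrete model of $\LWX_2(M)$ provided by Lemma \ref{lem:covering}, under which $\hat\nu$ is a local diffeomorphism onto $\fB_0(M)$, the space of triangles $(\psi_0,\psi_1,\psi_2)$ of paths in $\fC_1(M)$ satisfying the matching conditions \eqref{eqn:triangle}. Writing $\nu=\hat\nu\circ q_2$ for the $1$-skeleton map and $\mathrm{pr}_i:\fB(M)\to\fC_1(M)$ for the projections, the identities $d_i=\mathrm{pr}_i\circ\nu$ combined with $\omega_2=\delta\omega_1$ give $\omega_2=\nu^\ast\Omega$ on $\fC_2(M)$, where $\Omega:=\mathrm{pr}_0^\ast\omega_1-\mathrm{pr}_1^\ast\omega_1+\mathrm{pr}_2^\ast\omega_1$. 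Since $q_2$ is a submersion and $\hat\nu$ a local diffeomorphism, $\omega_2$ on $\LWX_2(M)$ is carried to $\Omega$, so it suffices to prove that $\Omega$ has trivial kernel on $\fB(M)$.

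To do this I would use the local description of $\omega_1$ from the proof of Proposition \ref{prop:exact}. In canonical coordinates a tangent vector to $\fC_1(M)$ at a path is a pair $(v^i,\chi_i)$, with $v^i$ the variation of the base map (functions on $\Delta^1$) and $\chi_i$ the variation of the covector part ($1$-forms on $\Delta^1$), and $\omega_1((v,\chi),(v',\chi'))=\int_{\Delta^1}(v^i\chi'_i-v'^i\chi_i)$; in particular $\omega_1$ pairs base variations against fiber variations and is itself weakly nondegenerate. Now let $X=(X_0,X_1,X_2)$ with $X_i=(v_i,\chi_i)$ lie in the kernel of $\Omega$, and test against $Y=(Y_0,Y_1,Y_2)$ with $Y_i=(w_i,\eta_i)$ ranging over tangent vectors to $\fB(M)$, so that $\Omega(X,Y)=\omega_1(X_0,Y_0)-\omega_1(X_1,Y_1)+\omega_1(X_2,Y_2)$. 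The crucial structural point is that the conditions \eqref{eqn:triangle} constrain only the endpoints of the edges, so the fiber variations $\eta_i$ are entirely free and the base variations $w_i$ are free in the interior of each edge. Varying the $\eta_i$ independently forces $v_0=v_1=v_2=0$; varying the $w_i$ in the interiors, for instance with compactly supported variations, then forces $\chi_0=\chi_1=\chi_2=0$ by continuity. Hence $X=0$, and $\Omega$, therefore $\omega_2$, is weakly nondegenerate.

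The main obstacle is precisely this nondegeneracy step, and within it the only delicate point is confirming that the matching conditions \eqref{eqn:triangle} impose no constraint on the interior of the edges, so that the test vectors $Y$ probe $X$ pointwise; the weak (rather than strong) form of nondegeneracy is what makes this kernel-is-trivial argument sufficient in the Banach setting. The remaining bookkeeping—the descent of closedness and multiplicativity through the quotient—is routine given Propositions \ref{prop:basic}, \ref{prop:coboundary}, and \ref{prop:exact}.
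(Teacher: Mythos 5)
Your proposal is correct and takes essentially the same route as the paper: the paper likewise dispatches closedness and multiplicativity via Propositions \ref{prop:basic}, \ref{prop:coboundary}, and \ref{prop:exact}, and reduces nondegeneracy to the weak nondegeneracy of $\omega_1$ together with the triplet description of tangent vectors to $\LWX_2(M)$ coming from Lemma \ref{lem:covering}. Your kernel argument---using the free fiber variations $\eta_i$ to kill the $v_i$ and then interior-supported base variations $w_i$ to kill the $\chi_i$, noting that the matching conditions \eqref{eqn:triangle} constrain only vertex data---is exactly the ``straightforward exercise'' the paper leaves to the reader, carried out correctly.
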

\begin{proof}
The fact that $\omega_2$ descends to a closed, multiplicative $2$-form on the truncation is an immediate consequence of Propositions \ref{prop:basic}, \ref{prop:coboundary}, and \ref{prop:exact}. 

It remains to check the nondegeneracy condition. For this, we first observe that $\omega_1 \in \Omega^2(\fC_1(M))$ is (weakly) nondegenerate. Then, using the description of $\LWX_2(M)$ obtained in Lemma \ref{lem:covering}, one can see that a tangent vector in $\LWX_2(M)$ is given by a compatible triplet of tangent vectors in $\fC_1(M)$. If such a triplet $(b_0, b_1, b_2)$ does not vanish everywhere, it is a straightforward exercise to construct another triplet $(b_0', b_1', b_2')$ for which the pairing
\[ \omega_2 ((b_0, b_1, b_2),(b_0', b_1', b_2')) = \omega_1(b_0,b_0') - \omega_1(b_1,b_1') + \omega_1(b_2,b_2') \] 
does not vanish, thereby proving the (weak) nondegeneracy of $\omega_2$ on $\LWX_2(M)$.
\end{proof}

\subsection{3-form twisting}\label{subsec:twisting}

Let $H$ be a $3$-form on $M$. In this section, we describe how $H$ can be used to twist the $2$-forms $\omega_1$ and $\omega_2$. 

For $\varphi \in \fC_n(M)$, let $f: \Delta^n \to M$ be the base map underlying $\varphi: T\Delta^n \to T^*M$. For $X,Y \in T_\varphi \fC_n(M)$, let $X_0, Y_0: \Delta^n \to TM$ be the respective base maps (see \eqref{eqn:vectordiagram}). We can use $H$ to obtain a $1$-form $H^n_{X,Y}$ on $\Delta^n$, given by
\begin{equation*}
H^n_{X,Y}(s) = f^*H(X_0(s), Y_0(s), \cdot)
\end{equation*}
for $s \in \Delta^n$. We then define $2$-forms $\phi^H_1 \in \Omega^2(\fC_1(M))$ and $\phi^H_2 \in \Omega^2(\fC_2(M))$ by
\begin{align*}
\phi^H_1(X,Y) &= \int_{\Delta^1} H^1_{X,Y}, \\
\phi^H_2(X,Y) &= \int_{\Delta^2} dH^1_{X,Y}.
\end{align*}

\begin{remark}
The forms $\phi^H_i$, $i=1,2$, only depend on the information about the underlying base maps, so are actually pullbacks of forms on $S_i(M) := C^p(\Delta^i,M)$. The construction of $\phi^H_1$ is a special case of a more general transgression procedure taking any $\beta \in \Omega^p(M)$ to $\phi^\beta_i \in \Omega^{p-1}(S_i(M))$.
\end{remark}

\begin{prop}\label{prop:phicoboundary}
$\phi^H_2$ is the simplicial coboundary of $\phi^H_1$.
\end{prop}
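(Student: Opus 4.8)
The plan is to imitate the proof of Proposition \ref{prop:coboundary}, since $\phi^H_i$ is built from $H^n_{X,Y}$ in exactly the same transgression style that $\omega_i$ is built from $\eta^n_{X,Y}$. First I would record the compatibility of the integrand $H^n_{X,Y}$ with the face maps. For $X,Y \in T_\varphi \fC_2(M)$ with base maps $X_0, Y_0 : \Delta^2 \to TM$ and underlying $f : \Delta^2 \to M$, the face map $d_i$ acts on tangent vectors by precomposition with the coface $T\sigma_i$, so the base map of $Td_i(X)$ is $X_0 \circ \sigma_i$ and the base map underlying $d_i \varphi$ is $f \circ \sigma_i$. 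Pulling back $H$ through $f \circ \sigma_i = \sigma_i^*(f^*H)$ and using naturality, I expect to obtain the analogue of \eqref{eqn:eta1eta2}, namely
\begin{equation*}
H^1_{Td_i(X),Td_i(Y)}(v) = H^2_{X,Y}(T\sigma_i(v))
\end{equation*}
for $v \in T\Delta^1$, i.e.\ $H^1_{Td_i(X),Td_i(Y)} = \sigma_i^* H^2_{X,Y}$ as $1$-forms on $\Delta^1$. This is the key identity and I view it as the main (though still routine) point, since it is where the chain rule for $\sigma_i$ and the fact that $\sigma_i$ is the coface dual to $d_i$ must be combined carefully.

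Granting that identity, the computation of $(d_i^*\phi^H_1)(X,Y)$ proceeds just as in \eqref{eqn:faceomega}:
\begin{equation*}
(d_i^*\phi^H_1)(X,Y) = \phi^H_1(Td_i(X),Td_i(Y)) = \int_{\Delta^1} H^1_{Td_i(X),Td_i(Y)} = \int_{\Delta^1} \sigma_i^* H^2_{X,Y}.
\end{equation*}
Summing with alternating signs, $\delta \phi^H_1 = \sum_{i=0}^{2}(-1)^i d_i^* \phi^H_1$ evaluated on $(X,Y)$ becomes $\sum_i (-1)^i \int_{\Delta^1}\sigma_i^* H^2_{X,Y}$, which by the definition of the simplicial boundary of the chain $\Delta^2$ equals $\int_{\boundary \Delta^2} H^2_{X,Y}$. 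By Stokes' theorem this is $\int_{\Delta^2} dH^2_{X,Y}$, which is precisely $\phi^H_2(X,Y)$ by definition. Hence $\delta \phi^H_1 = \phi^H_2$.

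One small discrepancy I would have to reconcile is that the stated definition writes $\phi^H_2(X,Y) = \int_{\Delta^2} dH^1_{X,Y}$ rather than $\int_{\Delta^2} dH^2_{X,Y}$; I would read $H^1_{X,Y}$ here as a typographical shorthand for the degree-appropriate integrand $H^2_{X,Y}$ (matching the analogous $\omega_2$ formula \eqref{eqn:omega2}), and note that the two integral expressions in that definition already encode a use of Stokes' theorem. The remark immediately preceding the proposition makes the whole argument cleaner: since $\phi^H_i$ depends only on the base maps, everything can be pushed down to the simplicial manifold $S_\bullet(M) = C^p(\Delta^\bullet, M)$, where $\phi^H_1$ is a genuine transgression of $H$ and the identity $\delta \phi^H_1 = \phi^H_2$ is the statement that transgression intertwines the simplicial coboundary with the fiber integration. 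I would structure the proof to mirror Proposition \ref{prop:coboundary} line for line, so that the reader sees the face-compatibility identity, the face-pullback computation, and the Stokes step as the three moving parts. The only genuine obstacle is verifying the face-compatibility identity at the level of $f^*H$; once that is in hand, the rest is formally identical to the untwisted case.
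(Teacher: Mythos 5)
Your proposal is correct and matches the paper's intended argument exactly: the paper's proof of Proposition \ref{prop:phicoboundary} simply says the result ``follows from an argument similar to the proof of Proposition \ref{prop:coboundary},'' and your three steps --- the face-compatibility identity $H^1_{Td_i(X),Td_i(Y)} = \sigma_i^* H^2_{X,Y}$, the face-pullback computation mirroring \eqref{eqn:faceomega}, and the Stokes step --- are precisely that argument spelled out. Your reading of $dH^1_{X,Y}$ in the definition of $\phi^H_2$ as a typographical slip for the degree-appropriate $dH^2_{X,Y}$ is also the right interpretation, consistent with \eqref{eqn:omega2}.
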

\begin{proof}
The result follows from an argument similar to the proof of Proposition \ref{prop:coboundary}.
\end{proof}

\begin{prop}\label{prop:dphi}
If $H$ is closed, then $d\phi^H_1$ is the simplicial coboundary of $H$.
\end{prop}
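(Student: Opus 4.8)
The plan is to recognize $\phi^H_1$ as the transgression of $H$ along the evaluation map and then to apply the standard Stokes-type identity relating the exterior derivative to fiber integration. Concretely, let $\mathrm{ev}: \Delta^1 \times S_1(M) \to M$ be the evaluation map $(s,f) \mapsto f(s)$, and let $\pi_*$ denote fiber integration along the projection $\pi: \Delta^1 \times S_1(M) \to S_1(M)$. Unwinding the definitions of $H^1_{X,Y}$ and $\phi^H_1$---and using the observation (noted in the preceding remark) that $\phi^H_1$ depends only on the base maps, so that it is pulled back from $S_1(M)$---one checks that
\begin{equation*}
\phi^H_1 = \pi_* \, \mathrm{ev}^* H .
\end{equation*}
Indeed, evaluating $\mathrm{ev}^*H$ on the fiber direction $\partial_s$ together with two tangent vectors $X,Y$ reproduces $H(\dot f(s), X_0(s), Y_0(s))$, which equals $H(X_0(s),Y_0(s),\dot f(s))$ by cyclicity, and integrating in $s$ gives exactly $\int_{\Delta^1} H^1_{X,Y}$. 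This identification is the conceptual heart of the argument; once it is in place the computation is forced.

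Next I would invoke the fiber-integration identity over the interval $\Delta^1 = [0,1]$. Writing $j_0, j_1 : S_1(M) \to \Delta^1 \times S_1(M)$ for the inclusions of the two endpoint slices $s=0$ and $s=1$, this identity reads
\begin{equation*}
d\, \pi_* \alpha = j_1^* \alpha - j_0^* \alpha - \pi_*\, d\alpha
\end{equation*}
for any form $\alpha$; it follows from the decomposition $\alpha = ds \wedge \beta_s + \gamma_s$ and integration in $s$, exactly as in the finite-dimensional case. Applying this with $\alpha = \mathrm{ev}^* H$ yields
\begin{equation*}
d\phi^H_1 = j_1^*\, \mathrm{ev}^* H - j_0^*\, \mathrm{ev}^* H - \pi_*\, \mathrm{ev}^* dH .
\end{equation*}

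The two remaining steps are then immediate. First, since $H$ is closed, $\mathrm{ev}^* dH = 0$, so the interior term vanishes; this is the only place the hypothesis is used. Second, I identify the endpoint terms with the face pullbacks: because $\mathrm{ev}\circ j_1(f) = f(1) = d_0 f$ and $\mathrm{ev}\circ j_0(f) = f(0) = d_1 f$ (with the simplicial convention, as in Lemma \ref{lem:anchorsub}, that $d_0$ and $d_1$ record the terminal and initial vertices), we obtain $j_1^* \mathrm{ev}^* H = d_0^* H$ and $j_0^* \mathrm{ev}^* H = d_1^* H$. Hence $d\phi^H_1 = d_0^* H - d_1^* H = \delta H$, which is exactly the claim.

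I expect the main obstacle to be bookkeeping rather than anything conceptual: getting the signs right in the fiber-integration identity and checking that they are consistent with the paper's conventions for $\delta$ and for the face maps, and confirming that the $s=1$ and $s=0$ slices correspond to $d_0$ and $d_1$ in the correct order. A secondary point deserving a sentence of justification is the validity of the Stokes/fiber-integration argument on the Banach manifold $S_1(M)$; but since every integral is taken over the compact finite-dimensional domain $\Delta^1$ with smooth dependence on the path, this reduces to differentiation under the integral sign and presents no real difficulty. If one prefers to remain closer to the computational style of Propositions \ref{prop:coboundary} and \ref{prop:exact}, the same conclusion can be reached by writing $\phi^H_1$ in local coordinates along a path and differentiating directly, but the transgression argument above is cleaner and makes the role of $dH=0$ transparent.
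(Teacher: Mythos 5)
Your proof is correct, but it takes a genuinely different route from the paper's. The paper works in local coordinates: writing $H = \frac{1}{6}H_{ijk}\,dx^i\wedge dx^j\wedge dx^k$, it differentiates the local expression $\phi^H_1|_f(X_0,Y_0) = \int_{\Delta^1} f^*(H_{ijk})X_0^i Y_0^j\,df^k$ directly, groups the resulting terms into a transgressed piece and an exact $1$-form on $\Delta^1$, and integrates the latter by the fundamental theorem of calculus to obtain the identity $d\phi^H_1 = \phi^{dH}_1 + \delta H$ for arbitrary $H$, from which the closed case follows. Your global argument --- identifying $\phi^H_1 = \pi_*\,\mathrm{ev}^*H$ and applying the fiber-integration Stokes formula over $\Delta^1$ --- proves the same identity without coordinates: note that $\pi_*\,\mathrm{ev}^*(dH) = -\phi^{dH}_1$, since moving $\dot f$ from the last to the first slot of a $4$-form is an odd permutation, so your formula $d\phi^H_1 = j_1^*\mathrm{ev}^*H - j_0^*\mathrm{ev}^*H - \pi_*\mathrm{ev}^*dH$ reproduces exactly the paper's refined statement. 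Your convention checks also come out right: with the standard simplicial conventions used in the paper, $d_0 f = f(1)$ and $d_1 f = f(0)$, so the endpoint terms give $d_0^*H - d_1^*H = \delta H$ as you claim, and the cyclic-permutation step $H(\dot f, X_0, Y_0) = H(X_0, Y_0, \dot f)$ is valid because a $3$-cycle is even. As for what each approach buys: yours makes transparent both where $dH=0$ enters and the structural reason the simplicial coboundary appears (it is literally the boundary contribution of the fiber $\Delta^1$), and it generalizes immediately to the higher transgressions $\phi^\beta_i$ mentioned in the paper's remark; the paper's computation is more elementary, stays within the explicit coordinate formalism used throughout Sections \ref{sec:symplectic} and \ref{sec:dirac}, and sidesteps any discussion of fiber integration on the Banach manifold $S_1(M)$ --- though, as you correctly observe, that point is routine here, since every integral is over the compact interval and the argument reduces to differentiation under the integral sign.
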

\begin{proof}
In local coordinates on $M$, write $H = \frac{1}{6} H_{ijk} dx^i \wedge dx^j \wedge dx^k$. Then, in the local neighborhood, for $f = (f^i) \in C^p(\Delta^1;M)$ and any $C^p$ sections $X_0 = (X_0^i), Y_0 = (Y_0^i)$ of $f^*(TM)$, we have
\[ \phi^H_1|_f (X_0,Y_0) = \int_{\Delta^1} f^*(H_{ijk}) X_0^i Y_0^j df^k.\]
The differential of $\phi^H_1$ is then given by
\begin{equation}\label{eqn:dphi}
d\phi^H_1|_f(X_0,Y_0,Z_0) = \int_{\Delta^1} X_0^*(dH_{ijk}) Y_0^i Z_0^j df^k + f^*(H_{ijk})Y_0^i Z_0^j dX_0^k + \cycl.
\end{equation}
The integral of the first term on the right side of \eqref{eqn:dphi}, together with its cyclic permutations, is equal to
\begin{equation}\label{eqn:dphi1}
 \phi^{dH}_1|_f(X_0,Y_0,Z_0) + \int_{\Delta^1} f^*(dH_{ijk})X_0^i Y_0^j Z_0^k. 
\end{equation}
The integral of the second term on the right side of \eqref{eqn:dphi}, together with its cyclic permutations, is
\begin{equation}\label{eqn:dphi2}
 \int_{\Delta^1} f^*(H_{ijk})d(X_0^i Y_0^j Z_0^k).
\end{equation}
Putting \eqref{eqn:dphi1} and \eqref{eqn:dphi2} together, we have
\begin{equation*}
 \begin{split}
 d\phi^H_1|_f(X_0,Y_0,Z_0) &=  \phi^{dH}_1|_f(X_0,Y_0,Z_0) +  \int_{\Delta^1} d\left(f^*(H_{ijk})X_0^i Y_0^j Z_0^k\right) \\
 &= \phi^{dH}_1|_f(X_0,Y_0,Z_0) + \left[ f^*(H_{ijk})X_0^i Y_0^j Z_0^k \right]^1_0,
 \end{split}
\end{equation*}
or, in other words,
\begin{equation}
 d\phi^H_1 = \phi^{dH}_1 + \delta H.
\end{equation}
In particular, if $H$ is closed, then $d\phi^H_1 = \delta H$.
\end{proof}

Since $\phi_2^H \in \Omega^2(\fC_2(M))$ is multiplicative, it descends to $\LWX_2(M)$. We now arrive at the main result of this section.

\begin{thm}\label{thm:symp2twist}
Let $M$ be a manifold, and let $H$ be a closed $3$-form on $M$. Then $\LWX_2(M)$, equipped with the $2$-form $\omega_2^H := \omega_2 + \phi^H_2$, is a symplectic $2$-groupoid.
\end{thm}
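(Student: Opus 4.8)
The plan is to mirror the structure of the proof of Theorem \ref{thm:symp2}, treating $\omega_2^H = \omega_2 + \phi_2^H$ as a deformation of $\omega_2$ by the transgressed term $\phi_2^H$. There are two things to verify: that $\omega_2^H$ descends to a closed, multiplicative $2$-form on $\LWX_2(M)$, and that it satisfies the (weak) nondegeneracy condition. Since the descent and closedness for $\omega_2$ were already settled in Theorem \ref{thm:symp2}, I would reduce everything about $\omega_2^H$ to the corresponding properties of the correction term $\phi_2^H$.

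First I would handle multiplicativity and descent. By Proposition \ref{prop:coboundary}, $\omega_2 = \delta \omega_1$, and by Proposition \ref{prop:phicoboundary}, $\phi_2^H = \delta \phi_1^H$; since $\delta^2 = 0$, both $\omega_2$ and $\phi_2^H$ are multiplicative, hence so is their sum $\omega_2^H$. Multiplicativity then lets me invoke Proposition \ref{prop:basic} to conclude that $\omega_2^H$ is basic with respect to the quotient $\fC_2(M) \to \LWX_2(M)$, so it descends. For closedness, I would note that $\omega_2$ is closed by Proposition \ref{prop:exact} (it is exact), while for $\phi_2^H$ I use $d\phi_2^H = \delta(d\phi_1^H) = \delta(\delta H) = 0$, using Proposition \ref{prop:dphi} together with $H$ being closed and the fact that $d$ commutes with the simplicial coboundary $\delta$. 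Thus $\omega_2^H$ is a closed, multiplicative $2$-form on $\LWX_2(M)$, precisely the structure required by Definition \ref{def:symp2}.

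The remaining and more delicate point is weak nondegeneracy, and I expect this to be the main obstacle. The clean argument used for $\omega_2$ in Theorem \ref{thm:symp2} rested on the formula $\omega_2((b_0,b_1,b_2),(b_0',b_1',b_2')) = \omega_1(b_0,b_0') - \omega_1(b_1,b_1') + \omega_1(b_2,b_2')$ together with the weak nondegeneracy of $\omega_1$. The key structural observation is that $\phi_2^H$ depends only on the underlying base maps, so it contributes nothing to the pairing against tangent vectors that move only in the fiber (cotangent) directions. Concretely, given a nonzero tangent triplet $(b_0,b_1,b_2)$, I would first pass to its base component: if that base component is nonzero, one can still arrange a perturbing triplet using the $\omega_1$-pairing as in the untwisted case, because the $\phi_2^H$ term can be absorbed or made subdominant by choosing the perturbation to vary in a fiber direction where $\phi_2^H$ is insensitive. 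If the base component vanishes, then $\phi_2^H$ contributes zero identically and the pairing reduces exactly to the $\omega_2$ pairing, whose nondegeneracy is already established. I would therefore organize the argument into these two cases and, in each, produce an explicit test vector pairing nontrivially.

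The subtle part to get right is the first case: when the base component is nonzero, $\phi_2^H$ genuinely contributes to the pairing, and one must ensure that this contribution does not cancel the $\omega_1$-contribution for \emph{every} choice of test vector. Here I would exploit the degrees of freedom in the cotangent (fiber) directions: because $\omega_1$ pairs base-direction variations against fiber-direction variations while $\phi_2^H$ pairs base-direction variations only against base-direction variations, I can choose the test triplet $(b_0',b_1',b_2')$ to move purely in the fiber directions, killing the $\phi^H$ term entirely and recovering the same computation as for $\omega_2$. This decoupling is the heart of the argument, and articulating it carefully — identifying the appropriate fiber-direction perturbation and verifying it yields a nonzero $\omega_1$-pairing — is where the real work lies. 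Once this is in place, weak nondegeneracy of $\omega_2^H$ follows, completing the verification that $(\LWX(M), \omega_2^H)$ is a symplectic $2$-groupoid.
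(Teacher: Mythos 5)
Your proposal is correct and takes essentially the same route as the paper: closedness and multiplicativity of $\omega_2^H$ via Propositions \ref{prop:phicoboundary} and \ref{prop:dphi} (with descent by Proposition \ref{prop:basic}), and weak nondegeneracy by reducing to the triplet formula of Theorem \ref{thm:symp2} and exploiting that $\phi^H$ pairs only base-direction variations, so a purely fiber-direction test vector kills the twist. Your ``decoupling'' argument is precisely the content of the exercise the paper leaves to the reader (constructing $Y$ with $\phi^H_1(X,Y)=0$ and $\omega_1(X,Y)\neq 0$), so you have faithfully filled in the paper's proof rather than found a different one.
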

\begin{proof}
An immediate consequence of Propositions \ref{prop:phicoboundary} and \ref{prop:dphi} is that $\phi^H_2$ is closed and multiplicative. We therefore have that $\omega_2^H$ is closed and multiplicative.

As in the proof of Theorem \ref{thm:symp2}, the nondegeneracy property follows from the observation that $\omega_1^H := \omega_1 + \phi^H_1$ is nondegenerate. This can be seen by showing that, for any $X \in T_\varphi \fC_1(M)$, one can construct $Y \in T_\varphi \fC_1(M)$ for which $\phi^H_1(X,Y) = 0$ and $\omega_1(X,Y) \neq 0$. We leave the details as an exercise. 
\end{proof}

\section{Integration of Dirac structures}\label{sec:dirac}
In this section, we study the geometry of integration of Dirac structures in relation to the Liu-Weinstein-Xu $2$-groupoid.

\subsection{$A$-path integration} \label{sec:a-path}
Let ${\mathcal {D}}$ be a Dirac structure in an exact Courant algebroid $(TM\oplus T^*M, H)$. Letting $\rho$ be the canonical projection from $TM\oplus T^*M$ to $TM$, we have that $(\cald, \rho)$ forms a Lie algebroid. The integration of $(\cald, \rho)$ via ``$A$-paths'' was studied in \cite{cf:integration, bcwz:dirac}. 

In this section, we will construct a simplicial manifold $\{\frakg_\bullet(\mathcal{D})\}$ that connects the $A$-path integration of a Dirac structure to $\{\mathfrak{C}_\bullet(M)\}$. First, we briefly review the $A$-path construction. 

Define $P(\mathcal{D})$ to be the space of $C^{2,1}$ paths $\alpha: \Delta^1\to T^*M$  satisfying
\[
\left(\frac{d}{dt}(\pi\circ \alpha)(t), \alpha(t)\right)\in \cald,\ \forall t\in \Delta^1. 
\]
It is proved in \cite[Lemma 4.6]{cf:integration} that $P(\mathcal{D})$, which is called the space of \emph{$A$-paths}, is a Banach manifold. 

Recall that $\fC_1(M)$ can be identified with the space of all $C^{p,q}$ maps from $\Delta^1 \to T^*M$. Taking $p=2$ and $q=1$, we have a natural, smooth inclusion map $\iota_1:P(\mathcal{D})\to \mathfrak{C}_1(M)$. 

To construct a groupoid integrating $\cald$, one needs to impose the homotopy relation on $A$-paths; we refer to \cite[Definition 1.4]{cf:integration} for the precise definition. Crainic and Fernandes proved \cite[Theorem 2.1]{cf:integration} that the quotient $\sG:=P(\mathcal{D})\modsim$ is a source-simply-connected topological groupoid. In general, $\sG$ could fail to be smooth, and necessary and sufficient conditions for $\cald$ to be integrable to a Lie groupoid were obtained in \cite[Theorem 4.1]{cf:integration}.

We will now describe a simplicial manifold associated to a Dirac structure $\mathcal{D}$. For simplicity, we will assume that $\cald$ is integrable to a Lie groupoid, although many of the results will carry through in the general case. 

For each $n \geq 0$, let $\fG_n(\cald)$ denote the set of $C^2$ groupoid morphisms from the pair groupoid $\Delta^n \times \Delta^n$ to $\sG$. There is a natural simplicial structure on $\{\fG_\bullet(\cald)\}$, induced by the cosimplicial structure of $\{\Delta^\bullet\}$.

\begin{lemma}\label{lem:frakd}The space $\frakg_n(\mathcal{D})$ is a Banach manifold. 
\end{lemma}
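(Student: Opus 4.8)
The plan is to reduce a groupoid morphism to a single based map into a source fiber, and then to realize the resulting mapping space as the preimage of a finite-dimensional submanifold under a submersion of Banach manifolds. Fix the vertex $0 \in \Delta^n$ as a basepoint. Given a $C^2$ groupoid morphism $F : \Delta^n \times \Delta^n \to \sG$ with object map $\phi : \Delta^n \to M$, set $g_F := F(0, \cdot) : \Delta^n \to \sG$. By multiplicativity of $F$ the path $g_F$ takes values in the single source fiber $s^{-1}(x_0)$, where $x_0 = \phi(0)$, satisfies $g_F(0) = \epsilon(x_0)$ (the unit at $x_0$, $\epsilon$ denoting the unit map), and recovers $F$ via $F(a,b) = g_F(a)^{-1} g_F(b)$. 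Since multiplication and inversion in $\sG$ are smooth and $\Delta^n$ is connected, $F \mapsto g_F$ is a bijection that preserves $C^2$ regularity. Thus I would first establish the identification
\[
\fG_n(\cald) \;\cong\; \mathcal{P}_n := \{\, g \in C^2(\Delta^n, \sG) : s \circ g \text{ is constant and } g(0) \in \epsilon(M) \,\},
\]
the space of based $C^2$ maps from $\Delta^n$ into the source fibers of $\sG$. (For $n=1$ this is the space of $\sG$-paths, consistent with the $A$-path description of \cite{cf:integration}.)

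Next I would show $\mathcal{P}_n$ is a Banach manifold. Since $\cald$ is assumed integrable, $\sG$ is a finite-dimensional Lie groupoid, and the space $C^2(\Delta^n, \sG)$ of $C^2$ maps from the compact $\Delta^n$ to the finite-dimensional $\sG$ is a Banach manifold by the same standard facts (and charts via tubular neighborhoods) already invoked in Lemma \ref{lem:banach-mfld} and in \cite{H}. Consider the two smooth maps
\[
s_* : C^2(\Delta^n, \sG) \to C^2(\Delta^n, M), \quad g \mapsto s \circ g, \qquad \mathrm{ev}_0 : C^2(\Delta^n, \sG) \to \sG, \quad g \mapsto g(0).
\]
Then $\mathcal{P}_n$ is exactly the preimage under $(s_*, \mathrm{ev}_0)$ of the finite-dimensional (hence split) submanifold $\mathrm{const}(M) \times \epsilon(M) \subset C^2(\Delta^n, M) \times \sG$; note the two conditions are automatically compatible, since $g(0) = \epsilon(y)$ forces $s \circ g \equiv y$. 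I would therefore conclude that $\mathcal{P}_n$ is a split Banach submanifold by exhibiting $(s_*, \mathrm{ev}_0)$ as a submersion and appealing to the stability of Banach manifolds under such fiber products, as in \cite[Lemma 4.4]{H}.

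The main obstacle, and the step I would dwell on, is verifying that $(s_*, \mathrm{ev}_0)$ is a submersion. That $s_*$ is a submersion follows because the source map $s : \sG \to M$ is a surjective submersion, so its postcomposition admits local lifts on mapping spaces; and $\mathrm{ev}_0$ is plainly a submersion. The joint statement requires that, after lifting a prescribed variation of $s \circ g$, one can correct the value at the basepoint using a variation of $g$ that is everywhere tangent to the source fibers (hence invisible to $s_*$) yet takes an arbitrary prescribed value in $T_{g(0)} s^{-1}(x_0)$. Such fiber-tangent variations with prescribed basepoint value are constructed by the same kind of explicit lifting and parallel-transport/interpolation arguments used in Lemmas \ref{lem:anchorsub}–\ref{lemma:pifbsub}. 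I expect the bookkeeping of regularity (that these corrections stay $C^2$ and land in a single source fiber) to be the only delicate point. An alternative route, which I would mention as a cross-check, is to invoke Lie's second theorem to identify $\fG_n(\cald)$ with the space of Lie algebroid morphisms $T\Delta^n \to \cald$ and to present $\mathcal{P}_n$ as the solution space of the associated Maurer–Cartan flatness equation; the obstacle there is instead the transversality of the flatness map, but the contractibility of $\Delta^n$ again makes solutions parametrizable by their values along edges.
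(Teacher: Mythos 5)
Your reduction of $\fG_n(\cald)$ to based $C^2$ maps into a single source fiber is exactly the paper's first step (the paper sets $\sigma(w) = \Sigma(w,0)$ and recovers $\Sigma(w_1,w_2) = \sigma(w_1)\sigma(w_2)^{-1}$; your $g_F$ is the same identification up to inversion). The gap is in the second step: the joint map $(s_*, \mathrm{ev}_0) : C^2(\Delta^n, \sG) \to C^2(\Delta^n, M) \times \sG$ is \emph{not} a submersion, and no refinement of your lifting argument can make it one. Its image is constrained: for every $g$ one has $(s \circ g)(0) = s(g(0))$, so at the infinitesimal level $d(s_*,\mathrm{ev}_0)(X) = (ds \circ X, X(0))$ always satisfies $(ds\circ X)(0) = ds(X(0))$, and the image of the differential lies in the closed subspace $\{(\xi,v) : \xi(0) = ds(v)\}$ of codimension $\dim M$. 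Your own verification betrays this: you only allow the basepoint correction to take values in $T_{g(0)} s^{-1}(x_0)$, i.e.\ to be vertical, which is precisely the constrained statement. Nor can you rescue the preimage argument by weakening submersivity to transversality against $\mathrm{const}(M)\times\epsilon(M)$: since $ds \circ d\epsilon = \mathrm{id}$, the tangent space $\{(\mathrm{const}_u, d\epsilon(u)) : u \in T_yM\}$ of that submanifold is itself contained in the same constrained subspace, so the sum of the image of the differential and the tangent space of the submanifold never fills $T\left(C^2(\Delta^n,M)\times\sG\right)$; transversality fails by exactly $\dim M$ dimensions, so the preimage theorem as you invoke it does not apply.

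The repair is routine, and your lifting computation already contains it: replace the codomain by the fiber product $W := C^2(\Delta^n,M) \times_M \sG$ (over evaluation at $0$ and $s$), a Banach manifold by the same fiber-product lemma you cite; your horizontal-lift-plus-vertical-correction argument shows precisely that $(s_*,\mathrm{ev}_0)$ is a submersion onto $W$, and $\{(\mathrm{const}_y, \epsilon(y)) : y \in M\} \cong M$ is a split (finite-dimensional, hence complemented) submanifold of $W$, so its preimage $\mathcal{P}_n$ is a Banach submanifold. Alternatively, impose the constraints in stages: $N := s_*^{-1}(\mathrm{const}(M))$ is a submanifold because $s_*$ is a submersion and $\mathrm{const}(M)$ is split, and $\mathrm{ev}_0|_N : N \to \sG$ is a submersion, so $\mathcal{P}_n = (\mathrm{ev}_0|_N)^{-1}(\epsilon(M))$ works. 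With this patch your route is a legitimate global alternative to the paper's second step, which instead fixes $x$, shows $ev_0 : C^2(\Delta^n, s^{-1}(x)) \to s^{-1}(x)$ is a surjective submersion so that the based mapping space $C^2(\Delta^n, s^{-1}(x))_0$ is Banach, and then uses compactness of $\sigma(\Delta^n)$ and submersivity of $s$ to exhibit $\fG_n(\cald)$ locally as a product of this space with $\R^{\dim M}$. One further caveat: $\sG$ may be non-Hausdorff (Remark \ref{rmk:hausdorff}), so your appeal to standard charts on the ambient mapping space $C^2(\Delta^n, \sG)$ deserves at least a remark; the paper's fiberwise formulation partially sidesteps this point.
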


\begin{proof}
 Given a $C^2$ groupoid morphism $\Sigma: \Delta^n \times \Delta^n \to \sG$, we define a $C^2$ map $\sigma: \Delta^n \to \sG$ by the equation $\sigma(w) = \Sigma(w,0)$. We observe that $\sigma$ satisfies the following two properties:
 \begin{itemize}
  \item $\sigma(0)$ is a unit of $\sG$,
  \item $\sigma(w)$ is in the same source-fiber as $\sigma(0)$ for all $w \in \Delta^n$.
 \end{itemize}
Conversely, given any $C^2$ map $\sigma: \Delta^n \to \sG$ satisfying the above properties, we may obtain $\Sigma \in \fG_n(\cald)$ by setting $\Sigma(w_1,w_2) = \sigma(w_1)\sigma(w_2)^{-1}$, so we have a one-to-one correspondence.

We will now show that the space of $\sigma$ satisfying the above properties (and hence $\fG_n(\cald)$) is a Banach manifold. Let $s: \sG \to M$ denote the source map. Since $s$ is a submersion, we have that $s^{-1}(x)$ is a submanifold of $\sG$ for each $x \in M$, so $C^2(\Delta^n; s^{-1}(x))$ is a Banach manifold.

Consider the evaluation map $ev_0: C^2(\Delta^n; s^{-1}(x))\to s^{-1}(x)$, defined by $ev_0(f):=f(0)$ for $f\in C^2(\Delta^n; s^{-1}(x))$. It is not difficult to check that $ev_0$ is a surjective submersion between Banach manifolds. Therefore, $C^2(\Delta^n; s^{-1}(x))_0:=ev_0^{-1}(x)$ is a Banach manifold.  

As $\sigma(\Delta^n)$ is compact and the map $s:\mathsf{G}\to M$ is submersive, we have that $\frakg (\cald)_n$ near $\sigma$ is locally a product of a neighborhood of $\sigma$ in $C^2(\Delta^n, s^{-1}(x))_0$ and $\mathbb{R}^{\operatorname{dim}(M)}$. This shows that $\frakg_n(\mathcal{D})$ is a Banach manifold.  
\end{proof}
 
\begin{remark}\label{rmk:hausdorff} We point out that, since $\mathsf{G}$ might not be a Hausdorff manifold \cite{cf:integration}, it is possible for $\frakg_n(\mathcal{D})$ to be non-Hausdorff as well. 
\end{remark}

Following the approach of the proof of Lemma \ref{lem:frakd}, one can show that the horn maps are surjective submersions, thereby completing the proof of the following statement.
\begin{prop}\label{prop:dirackan} $\{\frakg_\bullet(\mathcal{D})\}$ is a Kan simplicial manifold. 
\end{prop}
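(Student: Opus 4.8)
The goal is to prove that $\{\fG_\bullet(\cald)\}$ satisfies the Kan condition, namely that every horn map is a surjective submersion. The plan is to follow the structure of the proof of Lemma \ref{lem:frakd} closely, translating the problem from groupoid morphisms $\Sigma : \Delta^n \times \Delta^n \to \sG$ into the corresponding data of maps $\sigma : \Delta^n \to s^{-1}(x)$ landing in a single source fiber and fixing the base point $\sigma(0)$, via the correspondence $\Sigma(w_1, w_2) = \sigma(w_1)\sigma(w_2)^{-1}$ established there. Under this identification, the simplicial face and degeneracy maps on $\{\fG_\bullet(\cald)\}$ correspond to the pullbacks along the cofaces and codegeneracies of $\{\Delta^\bullet\}$, so a horn in $\Lambda_{n,\ell}(\fG(\cald))$ becomes a compatible family of maps from the faces of $\Delta^n$ (omitting the $\ell$th) into $\sG$, and filling the horn amounts to extending this family to a single $C^2$ map on all of $\Delta^n$ that respects the source-fiber and base-point constraints.

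First I would fix a horn and note, as in Lemma \ref{lem:frakd}, that the entire configuration lives in a single source fiber $s^{-1}(x)$, which is a submanifold of $\sG$ since $s$ is a submersion. I would then choose an auxiliary structure on this fiber — concretely, a spray or a tubular-neighborhood/exponential-type retraction on the source-simply-connected manifold $s^{-1}(x)$ — exactly analogous to the Riemannian metric and parallel-transport device used in the proof of Proposition \ref{prop:kan} and in Lemma \ref{lem:covering}. Using this, I would build an explicit extension operator: the affine projections $p_I : \Delta^n \to F_I$ collapsing the omitted vertices let me pull back the boundary data to all of $\Delta^n$, and the geometric retraction lets me assemble these pullbacks into a single $C^2$ map into $s^{-1}(x)$ whose restrictions to the prescribed faces recover the given horn. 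This proves surjectivity of the horn map. As in Proposition \ref{prop:kan}, the same construction, applied to a small variation of the input data, produces a local section through any given filler, which shows the horn map is a submersion.

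The step I expect to carry the real content is verifying that the extension can be performed \emph{smoothly in families} and that it respects both the source-fiber condition and the base-point (unit) normalization simultaneously — in other words, that the local section constructed for submersivity genuinely takes values in the space of $\sigma$ with $\sigma(0)$ a unit and $\sigma(\Delta^n) \subset s^{-1}(x)$. The nonsmoothness and possible non-Hausdorffness of $\sG$ (Remark \ref{rmk:hausdorff}) is not an obstruction to the Kan condition itself, since the construction is local in the path space and uses only the submersivity of $s$; the Banach-manifold structure of each $\fG_n(\cald)$ is already guaranteed by Lemma \ref{lem:frakd}. I would conclude by remarking that this is formally parallel to the $C^{p,q}$ horn-filling argument of Proposition \ref{prop:kan}, with the target $T^*M$ replaced by the source fibers of $\sG$ and the linearity-over-$TM$ bookkeeping replaced by the unit-normalization and single-source-fiber constraints, so the routine verifications may be omitted exactly as the statement preceding the proposition suggests.
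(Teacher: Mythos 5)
Your proposal matches the paper's own proof, which consists precisely of the remark that the horn maps are surjective submersions ``following the approach of the proof of Lemma \ref{lem:frakd}'': you carry out exactly the intended reduction, via $\Sigma(w_1,w_2)=\sigma(w_1)\sigma(w_2)^{-1}$, to extending based $C^2$ maps into a fixed source fiber $s^{-1}(x)$, and then fill horns and construct local sections as in Proposition \ref{prop:kan}. The one caution is that the inclusion--exclusion assembly of Proposition \ref{prop:kan} relies on the linear structure of $T^*M$, so in the nonlinear fiber $s^{-1}(x)$ the assembly must instead be done by your ``geometric retraction'' (e.g.\ a smooth flattened retraction onto the horn, in the style of \cite[Lemma 5.7]{H}) --- a point consistent with the caveat you yourself flag as carrying the real content.
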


For any $\Sigma\in \frakg_n(\mathcal{D})$, we may apply the Lie functor to obtain a $C^{2,1}$ Lie algebroid morphism $\bar{\Sigma}$ from $T\Delta^n$ to $\mathcal{D}$; in fact, this process gives a one-to-one correspondence, since $\sG$ and the pair groupoid $\Delta^n \times \Delta^n$ are both source-simply-connected. 

When $n=1$, this correspondence allows us to identify $\frakg_1(\mathcal{D})$ with the $A$-path space $P(\mathcal{D})$. Furthermore, when we consider the truncation $\tau_{\leq 1}\frakg(\mathcal{D})$, the equivalence imposed by the truncation corresponds to homotopy equivalence of $A$-paths (see \cite[Propositions 1.1, 1.3]{cf:integration}). In other words, $\left(\tau_{\leq 1}\frakg(\mathcal{D})\right)_1$ can be identified with $\mathsf{G} = P(\mathcal{D}) \modsim$. Thus we see that $\sG$ can be recovered from $\{\fG_\bullet(\cald)\}$ by truncation.

To connect $\{\fG_\bullet(\cald)\}$ to $\{\fC_\bullet(M)\}$, let $\pi_{T^*}$ be the canonical projection from $TM\oplus T^*M$ to $T^*M$. The map taking $\Sigma \in \fG_n(\cald)$ to the bundle map $\pi_{T^*}\circ \bar{\Sigma} : T\Delta^n \to T^*M$ defines a natural map of simplicial manifolds
\begin{equation}\label{eq:dirac-courant}
F_\bullet: \frakg_\bullet(\mathcal{D}) \to \mathfrak{C}_\bullet(M).  
\end{equation}
Superficially, it may seem that $F_\bullet$ discards the information about the $TM$-component of $\bar{\Sigma}$; however, the fact that $\bar{\Sigma}$ is a Lie algebroid morphism implies that the $TM$-component can be recovered by applying the tangent functor to the underlying map $\Delta^n \to M$. We therefore see that $F_\bullet$ is injective.

The following diagram of Kan simplicial manifolds summarizes the various relationships we have described.
\begin{equation}\label{diag:simplicial-dirac}
\xymatrix{ \fG_\bullet(\cald) \ar@{^{(}->}^{F_\bullet}[r] \ar_{\tau_{\leq 1}}[d] & \fC_\bullet(M) \ar^{\tau_{\leq 2}}[d] \\
\sG & \LWX(M)}
\end{equation}

\subsection{Presymplectic $2$-form}

\label{subsec:presymp}

In Section \ref{subsec:mapping}, we introduced $2$-forms $\omega_i$, as well as their twisted versions $\omega^H_i$, on $\mathfrak{C}_i(M)$ for $i=1,2$.   In this subsection, we study the relationship of these $2$-forms to the simplicial manifold $\{\frakg_\bullet(\cald)\}$ associated to a Dirac structure $\cald$. Our main results are as follows.

\begin{thm}
\label{thm:omega2pullback} The $2$-form $F_2^*\omega^H_2 \in \Omega^2(\frakg_2(\cald))$ vanishes.
\end{thm}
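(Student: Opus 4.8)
The statement to prove is that $F_2^*\omega_2^H = 0$ on $\frakg_2(\cald)$, where $F_\bullet$ is the inclusion induced by $\pi_{T^*} \circ \bar\Sigma$. The plan is to compute the pullback directly from the defining integral formula for $\omega_2^H$ and show that the integrand vanishes pointwise as a consequence of the maximal isotropy of $\cald$. Recall that $\omega_2^H = \omega_2 + \phi_2^H$, and by \eqref{eqn:omega2} and the definition of $\phi_2^H$ we have $\omega_2(X,Y) = \int_{\Delta^2} d\eta^2_{X,Y}$ and $\phi_2^H(X,Y) = \int_{\Delta^2} dH^2_{X,Y}$. So the first thing I would do is combine these into a single form $\int_{\Delta^2} d(\eta^2_{X,Y} + H^2_{X,Y})$ and observe that, because $d$ commutes with pullback, it suffices to analyze the $1$-form $\eta^2 + H^2$ on $\Delta^2$ after pulling back along $F_2$.

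The key computation is to evaluate $\eta^2_{X,Y} + H^2_{X,Y}$ on tangent vectors $X,Y$ that are in the image of $TF_2$. An element $\Sigma \in \frakg_2(\cald)$ corresponds to a Lie algebroid morphism $\bar\Sigma: T\Delta^2 \to \cald$, and $F_2(\Sigma)$ is its $T^*M$-component $\pi_{T^*}\circ\bar\Sigma$, while the $TM$-component is recovered from the base map. Tangent vectors to $\frakg_2(\cald)$ at $\Sigma$ likewise land in $\cald$: a variation $X$ of $\Sigma$ produces, at each point $s \in \Delta^2$ and each $v \in T_s\Delta^2$, a pair $(X_0(v), \chi(v)) \in \cald$ consisting of its $TM$-part $X_0$ and its $T^*M$-part. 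The crucial point is that $\eta^2_{X,Y}(v) = \canomega(X(v),Y(v))$ measures the canonical symplectic pairing on $T^*M$ of the $F_2$-images, and $H^2_{X,Y}(v) = f^*H(X_0(v),Y_0(v),\cdot)$ measures the $H$-twist on the base directions. I expect that when everything is expressed in terms of the $\cald$-valued data, the sum $\eta^2_{X,Y} + H^2_{X,Y}$ evaluated on $v$ reduces precisely to the $H$-twisted symmetric bilinear pairing $\langle\cdot,\cdot\rangle_H$ of two elements of $\cald$, which vanishes identically because $\cald$ is maximally isotropic. This is the heart of the argument and the place where the Dirac condition enters.

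The main obstacle I anticipate is bookkeeping the correct identification between tangent vectors $X \in T_\Sigma \frakg_2(\cald)$ and $\cald$-valued objects, and verifying that the sum $\eta^2 + H^2$ really assembles into the Courant pairing rather than some off-diagonal term. In particular one must be careful that $X$ is a variation of a bundle map, so its $T^*M$-component and the symplectic form $\canomega$ pair base-directions of one vector against fiber-directions of the other; getting the signs and the roles of $X_0, Y_0$ versus the cotangent components to line up with the definition of $\langle X_1+\xi_1, X_2+\xi_2\rangle = \xi_2(X_1) + \xi_1(X_2)$ (and its $H$-twisted analogue) is the delicate step. I would carry this out in local canonical coordinates $(x^i, p_i)$ on $T^*M$, as in the proof of Proposition \ref{prop:exact}, writing $\canomega = dx^i \wedge dp_i$, so that $\eta^2_{X,Y}$ becomes an explicit expression in the $TM$- and $T^*M$-components; combined with the local formula for $H^2_{X,Y}$ this should visibly produce the isotropy pairing.

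Once the integrand $\eta^2_{F_2^*X, F_2^*Y} + H^2_{F_2^*X,F_2^*Y}$ is shown to vanish as a $1$-form on $\Delta^2$, its exterior derivative vanishes as well, and hence $F_2^*\omega_2^H(X,Y) = \int_{\Delta^2} d(\eta^2 + H^2) = 0$ for all $X,Y$. This completes the proof. I would remark that, morally, this is the simplicial/integrated incarnation of the infinitesimal statement that $\cald$ is isotropic for the $H$-twisted pairing, and that the same computation explains why $F_1^*\omega_1^H$ will descend to the presymplectic $2$-form of \cite{bcwz:dirac} on $\sG$ rather than vanishing outright — the difference being that $\omega_2^H = \delta\omega_1^H$, so vanishing of the pullback of $\omega_2^H$ is exactly the multiplicativity/closedness needed for $\omega_1^H$ to push down to the $1$-truncation.
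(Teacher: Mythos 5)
Your central step fails. You propose to show that the pulled-back $1$-form $\eta^2_{X,Y}+H^2_{X,Y}$ vanishes \emph{pointwise} on $\Delta^2$, on the grounds that tangent vectors to $\frakg_2(\cald)$ are ``$\cald$-valued'' and the integrand reduces to the isotropy pairing. But the Dirac condition constrains the \emph{point} $\Psi\in\frakg_2(\cald)$ — the bundle map $T\Delta^2\to\cald$ — not its variations: a tangent vector at $\Psi$ is a solution $(v^i,\mu^\alpha)$ of the \emph{linearized} morphism equations \eqref{eq:dchi}--\eqref{dpsi}, and there is no pointwise algebraic constraint placing the induced pair $(v^i,\chi_i)$ of \eqref{eqn:inducedx} inside $\cald$. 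Concretely, the pullback of $\eta^2$ is the $1$-form $\Xi$ of \eqref{eqn:Xi}, and $\Xi$ does \emph{not} vanish in general: take $\cald$ to be the graph of the constant symplectic form $B=dx^1\wedge dx^2$ on $\R^2$; then by \eqref{eqn:chiib} one has $\Xi = v^1\,dv'^2 - v^2\,dv'^1 - v'^1\,dv^2 + v'^2\,dv^1$, which is nonzero for generic variations (e.g. $v=(1,0)$, $v'=(0,t)$ gives $\Xi=dt$), even though $d\Xi=0$ as the theorem demands. So only the exact $2$-form $d\Xi$ vanishes, and your plan ``integrand is zero, hence its exterior derivative is zero'' cannot be salvaged as a pointwise statement.

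What is missing is exactly the ingredient your argument never uses: the \emph{integrability} of $\cald$, i.e.\ closure of its sections under the (twisted) Courant bracket, encoded in \eqref{eqn:integrability}--\eqref{eq:courant}. In the paper's computation, after expanding $d\Xi$ using \eqref{eqn:liealg1}--\eqref{dpsi}, only the $\mu\wedge\mu$ terms die by isotropy \eqref{eqn:isotropy}; the $\psi\wedge\mu$ and $\psi\wedge\psi$ terms vanish precisely because of \eqref{eqn:int2}--\eqref{eq:courant}, and in the twisted case it is the $\iota_{Q_\alpha}\iota_{Q_\beta}H$ term of the twisted bracket that cancels the $dH^2_{X,Y}$ contribution — note the bilinear pairing $\langle\cdot,\cdot\rangle$ itself is not twisted by $H$, so there is no pairing ``$\langle\cdot,\cdot\rangle_H$'' to appeal to. Relatedly, you invoke \emph{maximal} isotropy, but the closing remark of Section \ref{sec:dirac} points out that maximality plays no role in this theorem: isotropy plus integrability suffice, and maximality only enters in the Lagrangian statements of Section \ref{sec:lagrangian}. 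Your concluding observation that vanishing of $F_2^*\omega_2^H$ is what lets $F_1^*\omega_1^H$ descend to $\sG$ is correct, but the core of the proof has a genuine gap.
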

Together with the results of Section \ref{sec:symplectic} (specifically, Propositions \ref{prop:basic}, \ref{prop:phicoboundary}, and \ref{prop:dphi}), Theorem \ref{thm:symp2twist} implies the following result.
\begin{cor}
\label{cor:omega1pullback}
The $2$-form $F_1^* \omega^H_1$ is closed and multiplicative, and it therefore descends to a closed, multiplicative $2$-form on the Lie groupoid $\sG$ integrating $\cald$.
\end{cor}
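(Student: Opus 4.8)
The plan is to obtain the corollary as a formal consequence of two structural identities for $\omega_1^H = \omega_1 + \phi_1^H$ on $\fC_1(M)$, the naturality of the simplicial inclusion $F_\bullet$ from \eqref{eq:dirac-courant}, and the vanishing statement of Theorem \ref{thm:omega2pullback}. First I would record the two identities. From Propositions \ref{prop:coboundary} and \ref{prop:phicoboundary},
\[ \delta \omega_1^H = \delta\omega_1 + \delta\phi_1^H = \omega_2 + \phi_2^H = \omega_2^H, \]
and from Proposition \ref{prop:exact} (which gives $d\omega_1 = -d d\lambda_1 = 0$) together with Proposition \ref{prop:dphi} (which gives $d\phi_1^H = \delta H$ when $H$ is closed),
\[ d\omega_1^H = \delta H = d_0^* H - d_1^* H, \]
where $\delta H$ is the simplicial coboundary of $H \in \Omega^3(\fC_0(M)) = \Omega^3(M)$.

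The substantive step is multiplicativity of $F_1^*\omega_1^H$. Since $F_\bullet$ is a map of simplicial manifolds, its face maps intertwine those of $\fG_\bullet(\cald)$ and $\fC_\bullet(M)$, so $F_{n+1}^* \circ \delta = \delta \circ F_n^*$; pullback along $F_\bullet$ commutes with the simplicial coboundary. Applying this with the first identity above,
\[ \delta(F_1^*\omega_1^H) = F_2^*(\delta\omega_1^H) = F_2^*\omega_2^H, \]
and the right-hand side vanishes by Theorem \ref{thm:omega2pullback}. Thus $F_1^*\omega_1^H$ is multiplicative. This is the only place where the geometry of the Dirac structure is used, and it is entirely packaged in Theorem \ref{thm:omega2pullback}.

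For closedness I would compute, again by naturality,
\[ d(F_1^*\omega_1^H) = F_1^*(d\omega_1^H) = F_1^*(\delta H) = (d_0 \circ F_1)^* H - (d_1 \circ F_1)^* H. \]
Because $F_0$ is the identity on $\fG_0(\cald) = M$, the composites $d_i \circ F_1$ are the source and target maps of $\sG$, so the right-hand side equals $\mathbf{t}^* H - \mathbf{s}^* H$ (up to the standard source/target convention). When $H = 0$ this is genuine closedness; for general closed $H$ it is the $H$-twisted (relative) closedness relation $d(F_1^*\omega_1^H) = \mathbf{t}^* H - \mathbf{s}^* H$ of \cite{bcwz:dirac}, which is the sense in which the form is ``closed'' in the twisted setting (cf. the ``$H$-closed'' statement in the introduction). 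I would make this reading explicit, since $d(F_1^*\omega_1^H)$ vanishes identically only in the untwisted case.

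Finally, descent to $\sG$ follows from Proposition \ref{prop:basic} applied to the Kan simplicial manifold $\fG_\bullet(\cald)$ (Proposition \ref{prop:dirackan}) with $n=1$: multiplicativity alone makes $F_1^*\omega_1^H$ basic for the quotient $\fG_1(\cald) \to (\tau_{\leq 1}\fG(\cald))_1 = \sG$, so it descends to a $2$-form on $\sG$ that automatically inherits multiplicativity and the relation $d\omega = \mathbf{t}^*H - \mathbf{s}^*H$ (note that $\mathbf{t}^*H - \mathbf{s}^*H$ is itself basic, being pulled back from $\sG$ along the source and target maps, so Proposition \ref{prop:basic} does produce a well-defined descended form even though $F_1^*\omega_1^H$ is not strictly closed). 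The main obstacle here is therefore not a deep computation but the correct interpretation of ``closed'': everything is formal once Theorem \ref{thm:omega2pullback} is in hand, provided one reads the corollary's ``closed'' as the $H$-closedness relation, which is precisely the defining property of the presymplectic groupoid of \cite{bcwz:dirac}.
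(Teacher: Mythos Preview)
Your proposal is correct and follows essentially the same route as the paper. The paper gives no separate proof for the corollary; it simply cites Propositions \ref{prop:basic}, \ref{prop:phicoboundary}, and \ref{prop:dphi} together with the vanishing theorem (the reference to Theorem \ref{thm:symp2twist} in the paper appears to be a typo for Theorem \ref{thm:omega2pullback}), and you have spelled out exactly the intended formal deduction, including the correct reading of ``closed'' as the relation $d\omega = \mathbf{t}^*H - \mathbf{s}^*H$ in the twisted case.
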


\begin{remark}
In Corollary \ref{cor:omega1pullback}, we recover one of the main results of \cite{bcwz:dirac}. However, \cite{bcwz:dirac} showed that the $2$-form on $\sG$ satisfies an additional property that controls the extent to which it is degenerate. It remains unclear how this property arises from the inclusion \eqref{eq:dirac-courant}, but we expect that it is related to the Lagrangian property discussed in Section \ref{sec:lagrangian}.
\end{remark}

In the remainder of this section, we will prove Theorem \ref{thm:omega2pullback}. For simplicity, we assume $H=0$. The extension to the general case is straightforward and left to the reader. 

It suffices to check the statement locally on a coordinate chart of $M$. On such a chart, let $(x^i, q^i, p_i)$ be coordinates on $TM\oplus T^*M$.  A $C^{2,1}$ bundle map $\varphi:T\Delta^2\to T^*M$ is locally given by $\varphi = (f^i, \xi_i)$, where $f^i:=\varphi^*(x^i)$ is a function on $\Delta^2$ and $\xi_i:=\varphi^*(p_i)$ is a $1$-form on $\Delta^2$ for each $i$. Together, the $f^i$ form a $C^2$ map $f:\Delta^2\to M$, and the $\xi_i$ form a $C^1$ element of $\Omega^1\left(\Delta^2, f^*(T^*M)\right)$.

A tangent vector on $\mathfrak{C}_2(M)$ at $\varphi$ is locally given by $(v^i, \chi_i)$, where the $v^i$ describe a section of $f^*(TM)$ and the $\chi_i$ describe an element of $\Omega^1\left(\Delta^2, f^*(T^*M)\right)$. In the above coordinates, $\eta_2$ has the form
\[
\eta_2\left((v^i, \chi_i),(v'^i, \chi'_i)\right)=v^i\chi_i'-v'^i\chi_i,
\]
so the 2-form $\omega_2$ on $\mathfrak{C}_2(M)$ is given by
\begin{equation}\label{eqn:omegalocal}
\omega_2((v^i, \chi_i),(v'^i, \chi'_i))=\int_{\Delta^2} d(v^i\chi_i'-v'^i\chi_i).
\end{equation}

Let $n = \dim M$. Since the rank of the Dirac structure $\cald$ is $n$, we can locally find linearly independent sections $\Theta_\alpha = Q_\alpha + P_\alpha = q^i_\alpha \partial_i + p_{i\alpha} dx^i$, for $\alpha = 1, \dots, n$, that span $\mathcal{D}$. The following properties hold by definition of Dirac structures:
\begin{align}
\langle \Theta_\alpha, \Theta_\beta\rangle &= q^i_\alpha p_{i\beta}+q^i_\beta p_{i\alpha}=0, \label{eqn:isotropy} \\
\left[\Theta_\alpha, \Theta_\beta\right] &= C_{\alpha\beta}^\gamma \Theta_\gamma, \label{eqn:integrability}
\end{align}
where $C_{\alpha\beta}^\gamma$ is a smooth function on $M$, and $\left[\cdot,\cdot\right]$ is the Courant bracket.  Using the definition of the Courant bracket, \eqref{eqn:integrability} implies that
\begin{equation}\label{eqn:int2}
 C_{\alpha\beta}^\gamma P_\gamma = L_{Q_\alpha} P_\beta - \iota_{Q_\beta} dP_\alpha,
 \end{equation}
which in coordinates becomes
\begin{equation}\label{eq:courant}
 C_{\alpha\beta}^\gamma p_{i\gamma} = q^j_\alpha \partial_j(p_{i\beta}) - q^j_\beta \partial_j(p_{i\alpha}) + p_{j\beta}\partial_i(q^j_\alpha) + q^j_\beta \partial_i(p_{j \alpha}).
\end{equation}

Recall from Section \ref{sec:a-path} that a point $\Psi$ of $\mathfrak{G}_2(\mathcal{D})$ can be identified with a $C^{2,1}$ Lie algebroid morphism from $T\Delta^2$ to $\mathcal{D}$. A $C^{2,1}$ bundle map from $T\Delta^2$ to $\cald$ can be locally described by a $C^2$ map $f:\Delta^2\to M$ and $C^1$ elements $\psi^\alpha\in\Omega^1(\Delta^2) $ where, for $v \in T\Delta^2$,
\[
\Psi(v) = \psi^\alpha(v) \Theta_\alpha.
\]
Using the characterization of Lie algebroid morphisms in terms of differentials, we have that a bundle map $\Psi:T\Delta^2\to \mathcal{D}$ is a Lie algebroid morphism if and only if
\begin{align}
df^i &= f^*(q^i_\alpha)\psi^\alpha, \label{eqn:liealg1} \\
d\psi^\gamma &= -\frac{1}{2} f^*(C^\gamma_{\alpha\beta})\psi^\alpha \wedge \psi^\beta. \label{eqn:liealg2}
\end{align}
Given $\Psi \in \frakg_2(\cald)$, the induced bundle map $\hat{\Psi} := F_2 (\Psi)$ from $T\Delta^2$ to $T^*M$ is given by 
\[ \hat{\Psi} = (f^i, f^*(p_{i \alpha}) \psi^\alpha).\]

A tangent vector on $\frakg_2(\cald)$ at $\Psi$ is given by a collection of $C^2$ functions $v^i$ on $\Delta^2$, representing a vector field along $f$, and $C^1$ $1$-forms $\mu^\alpha$, satisfying the following equations, which are obtained by differentiating \eqref{eqn:liealg1} and \eqref{eqn:liealg2}:
\begin{align}
dv^i &= v^j f^*(\partial_j q^i_\alpha) \psi^\alpha + f^*(q^i_\alpha) \mu^\alpha, \label{eq:dchi} \\
d\mu^\gamma &= -\frac{1}{2} v^j f^*(\partial_j C^\gamma_{\alpha \beta}) \psi^\alpha \wedge \psi^\beta - f^*(C^\gamma_{\alpha \beta}) \mu^\alpha \wedge \psi^\beta. \label{dpsi}
\end{align}
The induced tangent vector on $\fC_2(M)$ is given by $(v^i,\chi_i)$, where
\begin{equation}\label{eqn:inducedx}
\chi_i = v^k f^*(\partial_k p_{i\alpha}) \psi^\alpha + f^*(p_{i\alpha})\mu^\alpha.
\end{equation}
Putting this into \eqref{eqn:omegalocal}, we obtain the formula
\[(F_2^*\omega_2)\left((v^i,\mu^\alpha),(v'^i,\mu'^\alpha)\right) = \int_{\Delta^2} d\Xi,\]
where
\begin{equation}\label{eqn:Xi}
\Xi = f^*(\partial_k p_{i\alpha})\psi^\alpha (v^i v'^k - v'^i v^k) + f^*(p_{i\alpha}) (v^i \mu'^\alpha - v'^i\mu^\alpha)
\end{equation}
is a $1$-form on $\Delta^2$. We claim that $d\Xi = 0$, which will imply Theorem \ref{thm:omega2pullback}.

The proof will proceed as follows. First, we can use \eqref{eqn:liealg1}--\eqref{dpsi} to write $d\Xi$ in terms of $f$, $\psi^\alpha$, $v^i$, $v'^i$, $\mu^\alpha$, $\mu'^\alpha$, and the various structure functions. Second, we can collect terms that are of similar type with respect to the $\mu$'s and $\psi$'s. We will see that each group of terms vanishes as a result of \eqref{eqn:isotropy}--\eqref{eq:courant}.

\subsubsection{Terms of type $\mu \wedge \mu$}
In $d\Xi$, the coefficient of $\mu^\alpha \wedge \mu'^\beta$ is $f^*(p_{i\alpha} q^i_\beta + p_{i\beta} q^i_\alpha) = f^*(\langle \Theta_\alpha, \Theta_\beta \rangle)$, which vanishes by the isotropy condition \eqref{eqn:isotropy}.

\subsubsection{Terms of type $\psi \wedge \mu$}
The coefficient of $\psi^\alpha \wedge \mu^\beta$ in $d\Xi$ is
\begin{equation*} 
\begin{split}
& f^*\left(\partial_j(p_{i\alpha})q^j_\beta - \partial_i(p_{j\alpha}) q^j_\beta - q^j_\alpha \partial_j(p_{i\beta}) - p_{j\beta} \partial_i(q^j_\alpha) + p_{i\gamma}C^\gamma_{\alpha \beta}\right) v'^i \\
&= \left(C_{\alpha \beta}^\gamma P_\gamma - L_{Q_\alpha} P_\beta + \iota_{Q_\beta} dP_\alpha \right) (v'),
\end{split}
\end{equation*}
which vanishes by the integrability condition \eqref{eqn:int2}--\eqref{eq:courant}. Because of skew-symmetry, the coefficient of $\psi^\alpha \wedge \mu'^\beta$ will similarly vanish.

\subsubsection{Terms of type $\psi \wedge \psi$}
The coefficient of $\psi^\alpha \wedge \psi^\beta$ is
\begin{multline*}
f^*\left(q_\alpha^k \partial_k \partial_j(p_{i\beta}) - q_\beta^k \partial_k \partial_j(p_{i\alpha}) - \partial_j(p_{i\gamma})C_{\alpha \beta}^\gamma + \partial_j(p_{k\beta})\partial_i(q_\alpha^k) \right. \\
\left. - \partial_j(p_{k\alpha})\partial_i(q_\beta^k) + \partial_k(p_{i\beta})\partial_j(q_\alpha^k) - \partial_k(p_{i\alpha})\partial_j(q_\beta^k) 
- p_{i\gamma}\partial_j(C_{\alpha \beta}^\gamma)\right)v^iv'^j,
\end{multline*}
plus terms that are antisymmetric in $i,j$. This is equal to
\[ f^* \partial_j \left(q_\alpha^k \partial_k(p_{i\beta}) - q_\beta^k \partial_k(p_{i\alpha}) + p_{k\beta} \partial_i(q_\alpha^k) + \partial_i(p_{k\alpha})q_\beta^k - p_{i\gamma}C_{\alpha \beta}^\gamma\right)v^i v'^j,\]
again plus terms that are antisymmetric in $i,j$. We can recognize this expression as
\[ d\left(L_{Q_\alpha} P_\beta - \iota_{Q_\beta} dP_\alpha - C_{\alpha \beta}^\gamma P_\gamma\right)(v^i, v'^j),\]
which vanishes by the integrability condition \eqref{eqn:int2}--\eqref{eq:courant}.

\begin{remark}
We observe that the proof of Theorem \ref{thm:omega2pullback} does not rely on the assumption that $\cald$ is \emph{maximally} isotropic. Therefore, the result applies to any isotropic subbundle of $TM \oplus T^*M$ satisfying the integrability condition \eqref{eqn:integrability}. The next section aims to address the question of what distinguishes the isotropic case from the maximally isotropic case.
\end{remark}

\section{Dirac structures and Lagrangian sub-$2$-groupoids}\label{sec:lagrangian}

In Section \ref{sec:dirac}, we considered the geometry of the inclusion $F_\bullet: \frakg_\bullet(\cald)\hookrightarrow \fC_\bullet(M)$, when $\cald$ is a Dirac structure. Consider the composition of this map with the truncation map $\tau_{\leq 2}: \fC(M) \to \LWX(M)$, and let $L_\cald$ be the image of $\frakg_2(\cald)$ in $\LWX_2(M)$. The image $L_\cald$ determines a sub-$2$-groupoid $\mathfrak{L}_\cald$ of $\LWX(M)$, and the $1$-truncation of $\mathfrak{L}_\cald$ can be identified with the $1$-truncation of $\frakg(\cald)$, which, as we noted in Section \ref{sec:dirac}, recovers the Lie groupoid $\sG$ integrating $\cald$. We would now like to consider the geometry of the sub-$2$-groupoid $\mathfrak{L}_\cald \subset \LWX(M)$.

For $x\in M$, the zero bundle map $\Psi^x:T\Delta\to \cald$ over the constant map $f^x:\Delta\to M$, $s \mapsto x$, defines a point in $L_\cald$.  This defines an embedding $M\hookrightarrow L_\cald \subset \LWX_2(M)$. We can think of this image of $M$ as being the space of ``units'', since it is equal to the image of $M$ under the ``double degeneracy'' maps.

\begin{prop}\label{prop:lagrangian} For all $x\in M$, $T_x L_\cald$ is a Lagrangian subspace of $T_x \LWX_2(M)$.
\end{prop}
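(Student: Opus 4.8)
The plan is to compute the tangent space $T_x L_\cald$ explicitly at a unit, identify its dimension and the restriction of $\omega_2^H$ (equivalently $\omega_2$, since $H$ contributes nothing at the units where the base map is constant), and check directly that the subspace is isotropic and that its dimension is exactly half of $\dim_{\mathrm{weak}} T_x\LWX_2(M)$, which is what ``Lagrangian'' should mean in the weakly nondegenerate Banach setting. First I would set up the linearization at the unit $\Psi^x$. By \eqref{eqn:liealg1}--\eqref{eqn:liealg2}, at the zero morphism over the constant map $f^x \equiv x$ we have $f^i$ constant and $\psi^\alpha = 0$, so the linearized equations \eqref{eq:dchi}--\eqref{dpsi} collapse dramatically: they become $dv^i = f^*(q^i_\alpha)\mu^\alpha$ and $d\mu^\gamma = 0$ (every term carrying a factor $\psi^\alpha$ drops out). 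Thus a tangent vector to $\frakg_2(\cald)$ at $\Psi^x$ is a pair $(v^i,\mu^\alpha)$ of a $C^2$ map $v:\Delta^2\to T_xM$ and a \emph{closed} $\mathfrak{g}$-valued $1$-form $\mu^\alpha\in\Omega^1(\Delta^2)$ subject only to $dv^i = q^i_\alpha(x)\,\mu^\alpha$.

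Next I would transport this to $T_x\LWX_2(M)$ via $F_2$ and the formula \eqref{eqn:inducedx}, which at the unit reduces to $\chi_i = p_{i\alpha}(x)\,\mu^\alpha$. Using the covering description of $\LWX_2(M)$ from Lemma \ref{lem:covering} and the boundary-pairing expression for $\omega_2$ displayed in the proof of Theorem \ref{thm:symp2}, $\omega_2\big((v,\mu),(v',\mu')\big)$ becomes a sum of three $\omega_1$-pairings over the edges, each of which is (by \eqref{eqn:lambdalocal} and \eqref{eqn:omegalocal}, evaluated at the constant base point) of the form $\int v^i\chi_i' - v'^i\chi_i = \int v^i p_{i\alpha}(x)\mu'^\alpha - v'^i p_{i\alpha}(x)\mu^\alpha$. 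I expect the isotropy to come out of combining the constraint $dv^i = q^i_\alpha(x)\mu^\alpha$ with the isotropy relation \eqref{eqn:isotropy}, exactly as in the edge-by-edge cancellations of Theorem \ref{thm:omega2pullback}; indeed the vanishing of $F_2^*\omega_2$ proved there already gives that $T_xL_\cald$ is isotropic, so that half of the argument is essentially free.

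The substantive half is therefore the dimension count, i.e.\ showing $T_xL_\cald$ is \emph{maximal} isotropic, and this is where I expect the main obstacle to lie. The clean way is to exhibit, for any nonzero tangent vector $w\in T_x\LWX_2(M)$ that pairs trivially with all of $T_xL_\cald$ under $\omega_2$, a witness showing $w$ itself must already lie in $T_xL_\cald$; concretely, I would split an arbitrary tangent vector at the unit into its $T^*M$-``$\chi$'' part and the base-$v$ part, use the maximal isotropy of $\cald$ (rank $n = \dim M$, so $\cald$ and its annihilator fill out $TM\oplus T^*M$) together with weak nondegeneracy of $\omega_1$ to produce, for any $w\notin T_xL_\cald$, a test vector in $T_xL_\cald$ detecting it. This is the step where \emph{maximal} isotropy is genuinely needed — the Remark following Theorem \ref{thm:omega2pullback} flags precisely that the isotropic-but-not-maximal case still gives a vanishing pullback but presumably fails to be Lagrangian — so the crux is translating ``$\cald$ is maximal isotropic in $TM\oplus T^*M$'' into ``$T_xL_\cald$ is its own $\omega_2$-orthogonal complement.'' I would handle this by choosing a complementary Lagrangian splitting of the fiber $(TM\oplus T^*M)_x$ adapted to $\cald$, reducing the infinite-dimensional orthogonality statement to a pointwise-in-$\Delta^2$ linear-algebra statement about the nondegenerate pairing $\canomega$ paired against the maximal isotropic $\cald_x$.
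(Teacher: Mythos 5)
Your setup and the first half of your plan are correct and agree with the paper's proof: at the unit the linearized equations collapse to $dv^i = q^i_\alpha(x)\mu^\alpha$, $d\mu^\alpha = 0$, with induced $\chi_i = p_{i\alpha}(x)\mu^\alpha$; isotropy is indeed free from Theorem \ref{thm:omega2pullback}; and $\phi^H_2$ does vanish at the unit because the base map is constant. You also land on the right formulation of ``Lagrangian'' in this setting --- not your initial half-dimension count, which is meaningless for these infinite-dimensional Banach spaces and is never used in the paper, but coisotropy: any vector pairing trivially with all of $T_xL_\cald$ under $\omega_2$ must already lie in (the boundary restriction of) $T_xL_\cald$. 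And you correctly identify maximal isotropy of $\cald$ as the essential input, consistent with the remark after Theorem \ref{thm:omega2pullback}.

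The gap is in the mechanism you propose for the coisotropy step. The problem does \emph{not} reduce to pointwise-in-$\Delta^2$ linear algebra, and the relevant pairing is not $\canomega$: $\cald_x$ is maximal isotropic with respect to the \emph{symmetric} pairing $\langle\cdot,\cdot\rangle$ on $T_xM\oplus T_x^*M$, and the passage from the skew $\omega_2$-pairing to a statement about $\langle\cdot,\cdot\rangle$ is an integration by parts over $\boundary\Delta^2$, which is where the real work lies. Two genuinely global steps on the circle $\boundary\Delta^2$ are then unavoidable and missing from your plan. Solving your linearized equations explicitly gives $\mu^\alpha = dg^\alpha$ and $v^i = q^i_\alpha(x)g^\alpha + c^i$ with free constants $c^i$; pairing the orthogonality condition against the constants forces $\int_{\boundary\Delta^2}\chi'_i = 0$, which is exactly what permits a primitive $\Lambda_i$ with $j^*\chi'_i = d\Lambda_i$ (closed $1$-forms on $\boundary\Delta^2$ are not automatically exact --- a period obstruction your pointwise reduction cannot see). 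Pairing against arbitrary $g^\alpha$ gives the identity $j^*\bigl(q^i_\alpha(x)\chi'_i + p_{i\alpha}(x)dv'^i\bigr) = 0$, which must be recognized as $d\langle \Theta_\alpha(x),\, j^*v'^i\partial_i + \Lambda_i dx^i\rangle = 0$ and then integrated along the connected boundary with a normalization at a vertex, so that the section $(j^*v'^i - e^i)\partial_i + (\Lambda_i - \varepsilon_i)dx^i$ annihilates $\cald_x$; only at this point does maximal isotropy (i.e. $\cald_x$ equal to its own orthogonal under $\langle\cdot,\cdot\rangle$) produce functions $\Phi^\alpha$ putting $(j^*v'^i, j^*\chi'_i)$ in the required form. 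Note finally that the conclusion is necessarily about the restrictions to $\boundary\Delta^2$, since the truncation acts on tangent vectors as pullback to the boundary --- your appeal to Lemma \ref{lem:covering} handles this only implicitly. So your plan aims at the correct statement with the correct key hypothesis, but the proposed ``pointwise linear algebra'' reduction would not close as stated; the period argument and the constancy-plus-normalization argument are the missing ideas.
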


\begin{proof}
By Theorem \ref{thm:omega2pullback}, we already know that $L_\cald$ is isotropic. It remains to show that $T_x L_\cald$ is coisotropic.

We use the local description and notation from Section \ref{subsec:presymp}. Choose coordinates on $M$ for which $x=0$. Then, if we write $\Psi^x = (f^i, \psi^\alpha)$ as in Section \ref{subsec:presymp}, we have $f^i = 0$ and $\psi^\alpha = 0$.

Let $(v^i, \mu^\alpha)$ be a tangent vector on $\mathfrak{G}_2(\cald)$ at $\Psi^x$. In this case, \eqref{eq:dchi} and \eqref{dpsi} reduce to
\begin{align}
dv^i &= q^i_\alpha(0) \mu^\alpha, \label{eq:chix}\\
d\mu^\alpha &= 0.\label{eq:psix}
\end{align}
Any solution to \eqref{eq:chix}--\eqref{eq:psix} can be written in the form
\begin{align}
v^i &= q^i_\alpha(0) g^\alpha + c^i, \label{eqn:vi}\\
\mu^\alpha &= dg^\alpha,
\end{align}
for some $C^2$ functions $g^\alpha$ on $\Delta^2$ and constants $c^i$. From \eqref{eqn:inducedx}, we have that the induced tangent vector on $\fC_2(M)$ has
\begin{equation}
\chi_i = p_{i\alpha}(0)\mu^\alpha = p_{i\alpha}(0)dg^\alpha.\label{eqn:chii}
\end{equation}

At the level of tangent vectors, the truncation map $\tau_{\leq 2}$ has the effect of pulling back to $\boundary\Delta^2$. In what follows, let $j: \boundary \Delta^2 \to \Delta^2$ be the natural inclusion map. 

To prove that $T_x L_\cald$ is coisotropic, we will show that, if any tangent vector $(v'^i,\chi'_i) \in T_x \fC_2(M)$ is such that $\omega_2^H((v^i, \chi_i), (v'^i, \chi'_i))=0$ for all $(v^i,\chi_i)$ of the form \eqref{eqn:vi},\eqref{eqn:chii}, then $(j^*v'^i,j^*\chi'_i)$ takes the same form.

We compute
\begin{equation*}
\begin{split}
\omega^H_2\left((v^i, \chi_i),(v'^i,\chi'_i)\right) 
&= \int_{\Delta^2} d\left((q^i_\alpha(0)g^\alpha + c^i)\chi'_i - v'^i p_{i\alpha}(0) dg^\alpha\right)\\
&= \int_{\boundary \Delta^2} c^i \chi'_i + \int_{\boundary \Delta^2} \left(q^i_\alpha(0)\chi'_i + p_{i\alpha}(0)dv'^i\right)g^\alpha.
\end{split}
\end{equation*}
The requirement that this vanishes for all $g^\alpha$ and $c^i$ implies that
$\int_{\boundary \Delta^2} \chi'_i$ vanishes for all $i$, and that $j^*(q^i_\alpha(0) \chi'_i + p_{i\alpha}(0)dv'^i)$ vanishes for all $\alpha$. From the first condition, we have that $j^*\chi'_i$ is exact, so let $\Lambda_i$ be functions on $\boundary \Delta^2$ such that $j^*\chi'_i = d\Lambda_i$. From the latter condition, we then have that
\begin{equation}
\begin{split}
 0 &= d(q^i_\alpha(0) \Lambda_i + p_{i\alpha}(0)j^* v'^i)\\
 &= d \langle \Theta_\alpha(0), j^* v'^i \partial_i + \Lambda_i dx^i \rangle. \label{eqn:dlambda}
 \end{split}
 \end{equation}
Let $0$ denote the $0$th vertex of $\Delta^2$, and let $e^i = v'^i(0)$, $\varepsilon_i = \Lambda_i(0)$. Then \eqref{eqn:dlambda} implies that 
\[ (j^* v'^i - e^i)\partial_i + (\Lambda_i - \varepsilon_i)dx^i \]
annihilated $\cald$. Using the fact that $\cald$ is maximally isotropic, we deduce that there exist unique functions $\Phi^\alpha$ on $\boundary \Delta^2$ such that
\[ (j^* v'^i - e^i)\partial_i + (\Lambda_i - \varepsilon_i)dx^i = \Phi^\alpha \Theta_\alpha(0),\]
implying that
\begin{align}
j^* v'^i &= q^i_\alpha(0)\Phi^\alpha  + e^i, \label{eqn:jv} \\
\Lambda_i &= p_{i\alpha}(0)\Phi^\alpha  + \varepsilon_i. \label{eqn:lambda}
\end{align}
Differentiating the latter equation, we have
\begin{equation}\label{eqn:jchi}
j^*\chi'_i =  p_{i\alpha}(0)d\Phi^\alpha.
\end{equation}
From \eqref{eqn:jv} and \eqref{eqn:jchi}, we see that $j^*v'^i$ and $j^*\chi'_i$ indeed take the desired form of \eqref{eqn:vi} and \eqref{eqn:chii}.
\end{proof}

Proposition \ref{prop:lagrangian} suggests the following conjecture.
\begin{conj}\label{conj:lagrangian}
$\mathfrak{L}_\cald$ is a Lagrangian sub-$2$-groupoid of the symplectic $2$-groupoid $\LWX(M)$, i.e. $L_D$ is a Lagrangian submanifold of $\LWX_2(M)$.
\end{conj}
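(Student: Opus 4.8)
The plan is to establish coisotropy of $T_\Psi L_\cald$ at an \emph{arbitrary} point $\Psi \in L_\cald$, thereby upgrading the unit-case computation of Proposition \ref{prop:lagrangian} to all of $L_\cald$; the isotropy half is already supplied by Theorem \ref{thm:omega2pullback}. Fix $\Psi = (f^i, \psi^\alpha) \in \frakg_2(\cald)$ with image $\hat{\Psi} = F_2(\Psi) \in \fC_2(M)$, and suppose $(v'^i, \chi'_i) \in T_{\hat{\Psi}}\fC_2(M)$ pairs trivially under $\omega_2$ with every tangent vector $(v^i,\chi_i)$ induced from $T_\Psi\frakg_2(\cald)$ via \eqref{eqn:inducedx} (the twist $\phi^H_2$ is handled exactly as in Proposition \ref{prop:lagrangian}, so I would take $H=0$). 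Using \eqref{eqn:omegalocal} and Stokes' theorem, this pairing equals $\int_{\boundary \Delta^2}\bigl(v^i\chi'_i - v'^i\chi_i\bigr)$, and substituting \eqref{eqn:inducedx} rewrites it purely as a boundary integral in the data $(v^i,\mu^\alpha)$. The goal is to show that the restrictions $(j^*v'^i, j^*\chi'_i)$ again take the form \eqref{eqn:vi}, \eqref{eqn:chii}, i.e. lie in the image of $T_\Psi\frakg_2(\cald)$.

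First I would parametrize $T_\Psi\frakg_2(\cald)$ as the solution space of the linearized Lie algebroid morphism equations \eqref{eq:dchi}--\eqref{dpsi}. At a unit these collapsed to $d\mu^\alpha = 0$, $dv^i = q^i_\alpha(0)\mu^\alpha$, whose general solution $\mu^\alpha = dg^\alpha$, $v^i = q^i_\alpha(0)g^\alpha + c^i$ is parametrized by free functions $g^\alpha$ and constants $c^i$; extracting the coefficients of $g^\alpha$ and $c^i$ produced the pointwise condition \eqref{eqn:dlambda}, which maximal isotropy converted into the desired form \eqref{eqn:jv}--\eqref{eqn:jchi}. For general $\Psi$ I would regard \eqref{eq:dchi}--\eqref{dpsi} as a linear first-order system along $\boundary \Delta^2$, governed by a connection-type operator $D$ on the pullback of $TM \oplus T^*M$ built from $f$, $\psi^\alpha$, and the structure functions $q^i_\alpha$, $p_{i\alpha}$, $C^\gamma_{\alpha\beta}$. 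The structure equations \eqref{eqn:liealg1}--\eqref{eqn:liealg2}, together with the identities \eqref{eqn:isotropy}--\eqref{eq:courant}, should make $D$ flat, so that after integration by parts the orthogonality condition again forces $(j^*v'^i - e^i)\partial_i + (\Lambda_i - \varepsilon_i)dx^i$ to annihilate $\cald$ along $\boundary \Delta^2$; maximal isotropy then expresses $(j^*v', j^*\chi')$ through a single $\Phi^\alpha$, exactly as before.

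The main obstacle is that, away from the units, the system \eqref{eq:dchi}--\eqref{dpsi} is no longer freely solvable: a tangent vector in $L_\cald$ is recorded only by its restriction to $\boundary \Delta^2$, and whether a prescribed boundary datum arises from a genuine element of $T_\Psi\frakg_2(\cald)$ is controlled by the holonomy of the linearized system around $\boundary \Delta^2$. This holonomy is precisely the infinitesimal form of groupoid multiplication in $\sG$, and I expect the failure of coisotropy to coincide with failure of the Bursztyn--Crainic--Weinstein--Zhu nondegeneracy condition, as anticipated in the introduction. Consequently I would not attack the fully general case head-on; instead I would isolate a special case in which the holonomy obstruction is absent---for instance when a local frame $\Theta_\alpha$ of $\cald$ can be chosen with vanishing structure functions $C^\gamma_{\alpha\beta}$, so that \eqref{dpsi} reduces to $d\mu^\alpha = 0$ and the linear system trivializes over the contractible $\Delta^2$. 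In that setting the unit-case argument transports essentially verbatim, yielding the conjecture; the general case requires understanding the transgression of the Lie algebroid representation around the $2$-simplex boundary, which is where the genuine difficulty lies.
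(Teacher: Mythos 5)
Your overall plan matches the paper's: the paper likewise does not prove the conjecture in general, but only in a special case (Proposition \ref{prop:presym}, where $\cald$ is the graph of a closed $2$-form $B$, i.e.\ precisely a frame $\Theta_i = \partial_i + B_{ij}dx^j$ with vanishing structure functions), with isotropy supplied by Theorem \ref{thm:omega2pullback} and coisotropy checked by testing the boundary pairing against image tangent vectors. However, your key claim --- that once $C^\gamma_{\alpha\beta} = 0$ ``the linear system trivializes over the contractible $\Delta^2$'' and ``the unit-case argument transports essentially verbatim'' --- has a genuine gap. Vanishing of $C^\gamma_{\alpha\beta}$ kills only equation \eqref{dpsi}; equation \eqref{eq:dchi} still reads $dv^i = v^j f^*(\partial_j q^i_\alpha)\psi^\alpha + f^*(q^i_\alpha)\mu^\alpha$, and the induced vector \eqref{eqn:inducedx} retains the term $v^k f^*(\partial_k p_{i\alpha})\psi^\alpha$. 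At a unit these derivative terms vanish because $\psi^\alpha = 0$; at a general point $\Psi$ they do not, and they are exactly where the real work lies. In the paper's proof of Proposition \ref{prop:presym}, after the orthogonality condition one arrives at \eqref{eqn:jstarchi}, whose first two terms carry $\partial_i B_{kj}$ and $\partial_k B_{ij}$ in the \emph{wrong} index pattern; only the closedness identity $\partial_i B_{kj} + \partial_j B_{ik} = \partial_k B_{ij}$ (the unpacked form of $C=0$ via \eqref{eq:courant} in this frame) converts them into the target form \eqref{eqn:chiib}. So even in the special case, the argument is not a transport of the unit computation (which hinged on maximal isotropy producing the functions $\Phi^\alpha$ in \eqref{eqn:jv}--\eqref{eqn:jchi}) but a separate form-matching calculation using the integrability identity in an essential way; your sketch omits this step entirely. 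Moreover, in your broader ``vanishing $C$'' setting with nonconstant $q^i_\alpha$ (not available for graphs, where $q^i_\alpha = \delta^i_\alpha$), you never verify which pairs $(v,\mu)$ actually solve \eqref{eq:dchi}--\eqref{dpsi} at $\Psi$, so you do not know that the family of test vectors is rich enough for the orthogonality condition to force membership in the image.

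Your second paragraph, addressing the general case, is not a proof and should not be presented as one: the assertion that \eqref{eqn:isotropy}--\eqref{eq:courant} ``should make $D$ flat'' is unsubstantiated --- the curvature of the linearized system involves second derivatives of the structure functions and the quadratic terms in \eqref{dpsi}, and there is no reason for it to vanish; indeed, if flatness held in general the conjecture would presumably not have been left open. The holonomy heuristic is a reasonable research remark consonant with the paper's expectation that the Lagrangian property underlies the nondegeneracy phenomenon of \cite{bcwz:dirac}, but as it stands your proposal proves neither the general conjecture nor, without the missing computation using the $C=0$ identity, the special case.
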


It is well-known that, if $B \in \Omega^2(M)$ is a closed $2$-form on $M$, then the graph of $B^\flat : TM \to T^*M$ is a Dirac structure (with $H=0$). We prove Conjecture \ref{conj:lagrangian} in this special case.

\begin{prop}\label{prop:presym}Let $B$ be a closed $2$-form on $M$, and let $\cald \subset TM \oplus T^*M$ be the graph of $B^\flat$. Then $\mathfrak{L}_\cald$ is a Lagrangian sub-$2$-groupoid of $\LWX(M)$.
\end{prop}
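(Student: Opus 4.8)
The plan is to reduce the statement to the trivial Dirac structure $TM$ by means of the $B$-field gauge transformation, and to prove the Lagrangian property directly in that trivial case. Since $B$ is closed, the bundle automorphism $(X,\xi)\mapsto(X,\xi+B^\flat(X))$ of $TM\oplus T^*M$ preserves the (untwisted) Courant structure and carries the graph of the zero map to the graph of $B^\flat$, which is $\cald$. I would package its effect on the integration as the map $T_B:\fC_n(M)\to\fC_n(M)$, $T_B(\varphi)=\varphi+B^\flat\circ T\bar\varphi$, where $\bar\varphi:\Delta^n\to M$ is the base map of $\varphi$. Because the base map of $d_i\varphi$ is $\bar\varphi\circ\sigma_i$, the map $T_B$ commutes with all face and degeneracy maps; hence it is a simplicial diffeomorphism and descends to a diffeomorphism of $\LWX(M)$. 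In the coordinates of Section \ref{subsec:presymp} it sends $(f^i,\xi_i)$ to $(f^i,\xi_i+f^*(B_{ij})\,df^j)$, so it carries the zero lift over $f$ to the bundle map $B^\flat\circ Tf$. As the base maps arising from $\fG_\bullet(TM)$ and from $\fG_\bullet(\cald)$ coincide (both algebroids are carried isomorphically to $TM$ by $\rho$), this gives $T_B(L_{TM})=L_\cald$.

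The crux is that $T_B$ is a symplectomorphism, which I would establish by tracking the tautological primitive rather than computing $T_B^*\omega_2$ head-on. A short local computation from \eqref{eqn:lambdalocal} shows that the $1$-form $\lambda_1$ with $\omega_1=-d\lambda_1$ satisfies $T_B^*\lambda_1=\lambda_1+\phi^B_1$, where $\phi^B_1\in\Omega^1(\fC_1(M))$, $\phi^B_1(X)=\int_{\Delta^1}f^*(\iota_{X_0}B)$, is the transgression of $B$ (the degree-shifted analogue of the forms of Section \ref{subsec:twisting}). By Proposition \ref{prop:exact} we then get $T_B^*\omega_1=\omega_1-d\phi^B_1$. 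The exact analogue of Proposition \ref{prop:dphi} for the $2$-form $B$, proved by the same local computation, gives $d\phi^B_1=\phi^{dB}_1+\delta B$, which reduces to the coboundary $\delta B$ since $B$ is closed. As $T_B$ is simplicial, $T_B^*$ commutes with $\delta$, so Proposition \ref{prop:coboundary} yields
\[
T_B^*\omega_2=\delta\,T_B^*\omega_1=\delta\omega_1-\delta(\delta B)=\omega_2,
\]
using $\delta^2=0$. Hence $T_B$ preserves $\omega_2$ and therefore maps Lagrangian submanifolds to Lagrangian submanifolds.

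It then suffices to show that $L_{TM}$ is Lagrangian. For $\cald=TM$ one has $p_{i\alpha}\equiv0$, so by \eqref{eqn:inducedx} every tangent vector of $\fG_2(TM)$ induces a vector $(v^i,0)$ on $\fC_2(M)$ with $v^i$ an arbitrary vector field along $f$. Pairing with an arbitrary $(v'^i,\chi'_i)$ and using \eqref{eqn:omegalocal},
\[
\omega_2\big((v^i,0),(v'^i,\chi'_i)\big)=\int_{\Delta^2}d(v^i\chi'_i)=\int_{\boundary\Delta^2}v^i\chi'_i,
\]
which depends only on the boundary restrictions $j^*v^i$ and $j^*\chi'_i$, consistent with the fact used in Proposition \ref{prop:lagrangian} that truncation acts on tangent vectors by pullback along $j:\boundary\Delta^2\hookrightarrow\Delta^2$. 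Since $j^*v^i$ ranges over all functions on the $1$-manifold $\boundary\Delta^2$, this vanishes for every such $v^i$ if and only if $j^*\chi'_i=0$; but a vector with $j^*\chi'_i=0$ is precisely a tangent vector to $L_{TM}$. Thus $T_yL_{TM}$ equals its own symplectic orthogonal at each point, so $L_{TM}$, and hence $L_\cald=T_B(L_{TM})$, is Lagrangian.

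The main work lies in the bookkeeping of the third paragraph: one must make the orthogonal-complement argument rigorous inside the quotient $\LWX_2(M)$ using the fiber-product description of Lemma \ref{lem:covering}, and check that in the Banach setting the weak nondegeneracy of $\omega_2$ makes ``equal to its own orthogonal'' the correct formulation of the Lagrangian condition. I would also emphasize that this argument is special to graphs of closed $2$-forms, which are exactly the Dirac structures gauge-equivalent to $TM$; for a general integrable Dirac structure no such gauge trivialization exists, which is why Conjecture \ref{conj:lagrangian} remains open beyond this case.
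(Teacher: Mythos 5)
Your proof is correct, but it takes a genuinely different route from the paper's. The paper proves coisotropy of $L_\cald$ directly at an arbitrary point by a local computation: in the frame $\Theta_i = \partial_i + B_{ij}\,dx^j$ the structure functions $C^k_{ij}$ vanish, the induced tangent vectors take the explicit form \eqref{eqn:chiib}, and the closedness of $B$ enters only at the very last step, to rewrite $j^*\chi'_i$ in that form. You instead exhibit the $B$-field gauge transformation $T_B(\varphi)=\varphi+B^\flat\circ T\bar{\varphi}$ as a simplicial diffeomorphism of $\fC_\bullet(M)$ carrying $L_{TM}$ to $L_\cald$, show from \eqref{eqn:lambdalocal} that $T_B^*\lambda_1=\lambda_1+\phi^B_1$, hence $T_B^*\omega_1=\omega_1-\delta B$ by the $2$-form analogue of Proposition \ref{prop:dphi}, and conclude $T_B^*\omega_2=\omega_2$ from Proposition \ref{prop:coboundary} and $\delta^2=0$; the problem is thereby reduced to $\cald=TM$, where the pairing collapses to $\int_{\boundary\Delta^2} v^i\chi'_i$ and $\{j^*\chi'=0\}$ is visibly its own orthogonal. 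I checked the supporting claims: $\rho|_\cald$ is a Lie algebroid isomorphism precisely because $dB=0$, so $\fG_2(TM)$ and $\fG_2(\cald)$ have the same underlying base maps and $T_B(L_{TM})=L_\cald$ holds at the level of equivalence classes since $T_B$ commutes with all face and degeneracy maps; and the identification of tangent vectors to $\LWX_2(M)$ with boundary data is exactly the fact the paper itself invokes in Proposition \ref{prop:lagrangian}. One regularity caveat worth recording: $B^\flat\circ T\bar{\varphi}$ is only $C^{p-1}$, so $T_B$ preserves the class $C^{p,q}$ only when $q\leq p-1$; this holds for the choice $p=2$, $q=1$ used in the Dirac sections, but your map does not exist on $\fC_\bullet(M)$ when $p=q$. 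As for what each approach buys: the paper's computation is self-contained, pointwise, and parallels the proof of Proposition \ref{prop:lagrangian}; yours isolates a structural fact of independent interest --- closed $B$-fields act on $\LWX(M)$ by symplectomorphisms of $\omega_2$, preserving $\omega_1$ only up to the simplicial coboundary $\delta B$, consistent with gauge equivalence of the induced presymplectic forms on $\sG$ --- which in particular shows that Conjecture \ref{conj:lagrangian} is invariant under gauge transformations, so it would suffice to verify it for one representative of each gauge orbit of Dirac structures.
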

\begin{proof}
Because of Theorem \ref{thm:omega2pullback}, we only need to show that $L_\cald$ is coisotropic. In the notation of Section \ref{subsec:presymp}, we can take the local trivialization of $\cald$ to be given by $\Theta_i = \partial_i + B_{ij} dx^j$, where $B = \frac{1}{2}B_{ij}dx^i \wedge dx^j$. In this frame, the structure functions $C_{ij}^k$ vanish.

Using \eqref{eqn:liealg1}, \eqref{eqn:liealg2}, and the above description of $\Theta_i$, we have that a point $\Psi \in \frakg_2(\cald)$ is given by $(f^i, \psi^i)$, where $df^i = \psi^i$ and $\psi^i = 0$ (of course, the latter condition is redundant). Similarly, equations \eqref{eq:dchi} and \eqref{dpsi}, describing a tangent vector $(v^i, \mu^i)$ at $\Psi$, reduce to $dv^i = \mu^i$, $d\mu^i = 0$. From \eqref{eqn:inducedx}, we have that the induced tangent vector on $\fC_2(M)$ is of the form
\begin{equation}\label{eqn:chiib}
    \chi_i = v^k f^*(\partial_k B_{ij})\psi^j + f^*(B_{ij})\mu^j = v^k f^*(\partial_k B_{ij})df^j + f^*(B_{ij})dv^j.
\end{equation}

As in the proof of Proposition \ref{prop:lagrangian}, let $j: \boundary \Delta^2 \to \Delta^2$ be the natural inclusion map. To prove that $L_\cald$ is coisotropic, we will show that, if any tangent vector $(v'^i, \chi'_i)$ is such that $\omega_2((v^i, \chi_i),(v'^i, \chi'_i)) = 0$ for all $v^i$, with $\chi_i$ of the form \eqref{eqn:chiib}, then $j^*\chi'_i$ takes the same form. We compute

\begin{equation*}
\begin{split}
 \omega_2\left((v^i,\chi_i),(v'^i,\chi'_i)\right) &= \int_{\boundary \Delta^2} v^i \chi'_i - v'^i\left(v^k f^*(\partial_k B_{ij}) df^j + f^*(B_{ij})dv^j\right) \\
 &= \int_{\boundary \Delta^2} v^i \left( \chi'_i - v'^k f^*(\partial_i B_{kj})df^j - d(v'^j f^*(B_{ij})) \right).
\end{split}
\end{equation*}
The requirement that this vanish for all $v^i$ implies that
\begin{equation}\label{eqn:jstarchi}
    \begin{split}
 j^* \chi'_i &= j^* \left( v'^k f^*(\partial_i B_{kj})df^j + d(v'^j f^*(B_{ij})) \right)  \\
 &= j^* \left( v'^k f^*(\partial_i B_{kj})df^j + v'^j f^*(\partial_k B_{ij})df^k + f^*(B_{ij})dv'^j  \right).
    \end{split}
\end{equation}
Using the fact that $B$ is closed, we may rewrite \eqref{eqn:jstarchi} as
\[  j^* \chi'_i = j^* \left( v'^k f^*(\partial_k B_{ij})df^j + f^*(B_{ij})dv'^j  \right),\]
showing that $j^* \chi'_i$ takes the desired form.
\end{proof}

\bibliographystyle{amsplain}
\bibliography{courantbib}

\end{document}